\newtheorem{theorem}{Theorem}[section]
\newtheorem{thm}[theorem]{Theorem}
\newtheorem{lemma}{Lemma}[section]
\newtheorem{corollary}[lemma]{Corollary}
\newtheorem{remark}{Remark}[section]
\newtheorem{definition}{Definition}[section]
\author{Hui Rao\affiliationmark{1}\thanks{The work is supported by NSFS Nos. 11431007 and 11971195.}
  \and Lei Ren\affiliationmark{1,2}\thanks{The corresponding author.}
  \and Yang Wang\affiliationmark{3} \thanks{The work is supported by HK GRF 16317416.}
  }
\title[Formatting an article for DMTCS]{Dissecting a square into congruent polygons}
\affiliation{
  % one line per affiliation, no postal codes, grant numbers or similar
  School of Mathematics and  Statistics, CCNU, China\\
  School of Mathematics and Statistics,   XYNU, China\\
  Department of Mathematics, HKUST, Hong Kong}
\keywords{ Tiling, Eulerian graph, hypotenuse graph}
\begin{document}
%\publicationdetails{VOL}{2015}{ISS}{NUM}{SUBM}
\publicationdetails{22}{2020}{1}{21}{6022}
\maketitle
\begin{abstract}
  We study the dissection of a square into congruent convex polygons.
Yuan \emph{et al.} [Dissecting the square into five congruent parts, Discrete Math. \textbf{339} (2016) 288-298]
asked whether, if the number of tiles is a prime number $\geq 3$,
it is true that  the tile must be a rectangle.
 We conjecture that the same conclusion still holds even if
  the number of tiles is an odd number $\geq 3$.
  Our conjecture has been confirmed  for triangles in earlier works.
We prove that  the conjecture holds if either the tile is a convex $q$-gon with $q\geq 6$ or
 it is a right-angle trapezoid.
\end{abstract}

\section{Introduction}

Let $\Omega$ be a polygon in $\reals^2$, and let
  $\{P_j;~j=1,\dots, N\}$ be a  family of
polygons. We call $\{P_j\}_{j=1}^N$ a \emph{tiling} or \emph{dissection} of $\Omega$,
if
\begin{math}
\Omega=\bigcup_{j=1}^N  P_j
\end{math}
and the right hand side is a non-overlapping union, that is, the interiors of the tiles are pairwise disjoint.
In particular, we are interested in the tiling
\begin{equation}\label{eq:Danzer}
\Omega=\bigcup_{j=1}^N  P_j,
\end{equation}
where $\Omega$ is a square, and all $P_j$, $j\in \{1,\dots, N\}$, are congruent to a convex polygon $P$.
In this case, we also say that $P$ can tile $\Omega$. (Two sets $A$ and $B$ are congruent if $A=g(B)$ where $g$
is a composition of a rotation, possibly a reflection and a translation.)

In the 1980's, Ludwig Danzer conjectured   that
if  $N=5$ in (\ref{eq:Danzer}), then $P$ must be a rectangle (see \citep{Yua2016}).
Yuan \emph{et al.} \cite{Yua2016} proved that Danzer's conjecture is true, and
asked whether, if the number of tiles is a prime number $\geq 3$,
it is true that  the tile must be a rectangle.
Except $N=5$, this question was answered confirmatively for $N=3$ in \cite{Mal1994}.

In this paper, we formulate a stronger conjecture:

\medskip
\noindent \textbf{Conjecture 1.} \emph{If  a convex polygon $P$ can tile a square  and the number of
tiles is an odd number  $\geq 3$, then $P$ must be a rectangle.}
\medskip

A polygon is called a \emph{q-gon} if it has $q$ vertices and thus $q$ sides.
Instead of considering the number of tiles $N$ case by case as in \cite{Mal1994, Yua2016}, we study the problem on $q$ case by case.

When $q=3$, Conjecture 1 is confirmed by Thomas \cite{Tho1968} and Monsky \cite{Mon1970}. Actually, they proved the following surprising  result: If a rectangle is tiled by $N$ triangles with the same area, then $N$ must be an even number.

Our first result is to  show that Conjecture 1 is true for $q\geq 6$. Actually, we prove the following stronger result:

\begin{theorem}\label{thm:six-gon} Let $R$ be a rectangle and $K$ be a convex $q$-gon  with $q\geq 6$.
 Then $K$ cannot tile $R$.
\end{theorem}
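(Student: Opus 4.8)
\section*{Proof proposal for Theorem~\ref{thm:six-gon}}

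The plan is to derive a contradiction from Euler's formula applied to the planar graph underlying a hypothetical tiling, using convexity of $K$ \emph{only} to get lower bounds on vertex degrees. Suppose, for contradiction, that $R=\bigcup_{j=1}^N P_j$ where each $P_j$ is congruent to the convex $q$-gon $K$ with $q\ge 6$. Let $G$ be the planar graph whose vertex set $V$ consists of all vertices of all the tiles $P_j$ (the four corners of $R$ are automatically among them, since $\partial R$ is covered by tile boundaries), and whose edges $E$ are the line segments into which the sides of the tiles are partitioned by the points of $V$. The faces of $G$ are the $N$ open tiles together with the unbounded face, so $F=N+1$; since $G$ is connected, Euler's formula gives $V-E+(N+1)=2$, that is, $E=V+N-1$.

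The first main step is the degree bound. Consider an interior vertex $x$ of $G$: the tiles containing $x$ partition a full angular neighbourhood of $x$, each contributing a circular sector whose angle is either an interior angle of that tile (hence $<\pi$ by convexity) or exactly $\pi$ (when $x$ lies in the relative interior of one of its sides). The $\deg(x)$ edges of $G$ at $x$ are precisely the walls between consecutive sectors, so there are $\deg(x)$ sectors with angles summing to $2\pi$. If $\deg(x)=2$ the two sectors would both have angle $\pi$, forcing $x$ to lie in the relative interior of a side of each tile and hence not be a tile vertex at all, a contradiction; thus $\deg(x)\ge 3$. Running the same argument inside the half-disc at a boundary point of $R$ that is not a corner shows every such boundary vertex also has $\deg\ge 3$, while the four corners of $R$ have $\deg\ge 2$. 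Hence
\[
2E=\sum_{x\in V}\deg(x)\ \ge\ 3(V-4)+2\cdot 4\ =\ 3V-4 ,
\]
which together with $E=V+N-1$ yields $V\le 2N+2$.

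The second step counts edge--face incidences. Each tile is a $q$-gon, so as a face of $G$ its boundary is a simple cycle consisting of at least $q$ edges (exactly $q$ plus the number of other tiles' vertices lying in the relative interiors of its sides), and the outer face consists of at least $4$ edges. Summing over faces and using that every edge lies on exactly two faces,
\[
2E=\sum_{f}|f|\ \ge\ Nq+4 ,
\]
so $2V=2E-2N+2\ge N(q-2)+6$. Combining with $V\le 2N+2$ gives $N(q-2)+6\le 4N+4$, i.e. $q-2\le 4-\tfrac{2}{N}<4$, hence $q\le 5$, contradicting $q\ge 6$. Therefore no such tiling exists.

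The step I expect to be the main obstacle is the rigorous justification of the degree lower bounds: carefully handling ``T-vertices'' where a vertex of one tile lies on the interior of a side of a neighbour, verifying that such vertices still have degree $\ge 3$, and confirming that the combinatorial model (connectivity of $G$, the count $F=N+1$, absence of bridges so that $\sum_f|f|=2E$) is valid for an arbitrary, not necessarily edge-to-edge, dissection. Once the planar graph is set up correctly, the Euler-formula bookkeeping in the last two steps is routine.
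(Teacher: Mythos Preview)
Your argument is correct. The degree bounds, the face count $F=N+1$, connectivity of $G$, and the absence of bridges all hold for an arbitrary (not necessarily edge-to-edge) dissection, for essentially the reasons you sketch; the arithmetic at the end is fine and yields $q\le 5$.

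Your route, however, is genuinely different from the paper's. The paper never sets up the planar graph or invokes Euler's formula. Instead it does a pure angle count: summing the interior angles of all $N$ tiles in two ways gives $2|\mathcal F|+|\mathcal H|+2=N(q-2)$ and $F+H+\hbar=Nq$, where $\mathcal F,\mathcal H$ are the interior and boundary (non-corner) vertices and $F,H,\hbar$ count tile--vertex incidences at those three kinds of vertices. Convexity then gives $F\ge 3|\mathcal F|$, $H\ge 2|\mathcal H|$, $\hbar\ge 4$, and a short manipulation produces the identity
\[
(q-6)|\mathcal F|+(q-4)|\mathcal H|+(q-2)\Delta+2q=8,
\]
whose left side is at least $2q\ge 12$ once $q\ge 6$. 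The two proofs use the same convexity input (at an interior vertex at least three tiles meet as a true vertex), but encode it differently: you feed it into the handshake inequality $2E\ge 3V-4$ and combine with Euler, while the paper feeds it into an incidence count and combines with the angle-sum identity. Your approach is the classical Gr\"unbaum--Shephard style argument and has the advantage of being immediately recognisable to readers of tiling literature; the paper's version is a bit more self-contained (no graph-theoretic scaffolding) and yields the sharper equation above, which also pins down what can happen for $q=5$ and $q=4$.
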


The proof of the above theorem is motivated  by Feng \textit{et al.} \cite{Dej2014}.

In another paper \cite{RRW}, we show that Theorem \ref{thm:six-gon} still holds for $q=5$, with the help of a computer to verify hundreds of cases.
So, for Conjecture 1, the only remaining case is $q=4$, which seems to be very difficult. In this paper, we give a partial answer for this case. A \emph{right-angle trapezoid} is a trapezoid with angles  $\pi/2, \pi/2, \alpha, \pi-\alpha$ where $0<\alpha<\pi/2$, where the two right angles are adjacent.

\begin{theorem}\label{thm:trapezoid}
Let $P$ be a right-angle trapezoid.
 If $P$ can tile a square, then the number of tiles must be even.
\end{theorem}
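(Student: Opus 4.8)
The plan is to use a coloring/valuation argument in the spirit of the Monsky-Thomas theorem, adapted to the quadrilateral setting. Let the right-angle trapezoid $P$ have parallel sides of lengths $a > b$, height $h$, and the slanted side of length $\ell$; then $\ell = h/\sin\alpha$ and the horizontal offset of the slant is $c = h/\tan\alpha = a-b$. Suppose $P$ tiles the unit square with $N$ tiles. Since all tiles are congruent, each has area $\tfrac{1}{2}(a+b)h = 1/N$. The key structural observation is that each tile has exactly two right angles, and these occur at the two endpoints of one of the parallel sides; I would first analyze how tiles can meet along edges and at vertices, paying attention to the slanted edges. In any edge-to-edge or partially-overlapping adjacency, a slanted edge of one tile must be matched (in aggregate) by slanted edges of others, because the horizontal and vertical edges of all tiles are axis-parallel only if we are lucky — so the first real step is to show that, after possibly a rational change of coordinates, we may assume the trapezoid sits with its two parallel sides horizontal and that the whole configuration lives in a field generated by $a,b,h$ and $\tan\alpha$ (or $\sin\alpha$) over $\mathbb{Q}$.

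Next I would set up a $2$-adic-type valuation. The heart of Monsky's method is to choose a non-Archimedean valuation $v$ on $\mathbb{R}$ (extending a $p$-adic valuation on $\mathbb{Q}$, here $p=2$) so that the three-coloring of the plane it induces forces any tile of area $1/N$ (with $N$ odd) to be "rainbow," i.e. to have vertices of all three colors, and then to derive a contradiction by a combinatorial parity count (a Sperner-type lemma) on the boundary of the square. The subtlety for the trapezoid is twofold: (i) a tile has more than three vertices, so I must show that a monochromatic or two-colored trapezoid cannot have the correct area; and (ii) the slant of the trapezoid introduces the quantity $\tan\alpha$, whose valuation must be controlled. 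I would split into cases according to $v(\tan\alpha)$: if $v(\tan\alpha)\geq 0$ the trapezoid behaves essentially like a union of a rectangle and a right triangle under the coloring, and the triangle part is handled exactly as in Monsky; the delicate case is $v(\tan\alpha)<0$, where the slant is "long" $2$-adically and one must choose the valuation more carefully — possibly extending to a valuation in which $\tan\alpha$, or equivalently $a-b$ relative to $h$, has a prescribed value. Because $N$ is odd, $v(1/N)=0$, which pins down $v(a+b)+v(h)=v(2)>0$; combined with the area constraint this is what ultimately makes a non-rainbow tile impossible.

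Concretely the steps are: (1) reduce to a normalized coordinate system and identify the relevant number field $L=\mathbb{Q}(a,b,h,\tan\alpha)$, noting all vertices of all tiles lie in $L^2$; (2) choose a valuation $v$ on $L$ lying over the $2$-adic valuation on $\mathbb{Q}$, with an appropriate value of $v(\tan\alpha)$ (a choice argument — such an extension exists because $L/\mathbb{Q}$ is finitely generated); (3) define Monsky's three coloring of $L^2$ by the values of $v(x), v(y)$ relative to $v(1), v(2)$; (4) show that any trapezoid congruent to $P$ with area $1/N$, $N$ odd, placed with vertices in $L^2$, must receive at least... actually all three colors on some triangle cut from it, i.e. contains a rainbow triangle — this is the computational core and uses the area formula together with the valuation inequalities on $a+b$, $h$, $\tan\alpha$; (5) run the boundary parity argument: the number of rainbow "elementary" triangles in any triangulation refining the tiling is odd (Sperner), but if every tile forces an even or controlled count we get a contradiction with $N$ odd. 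The main obstacle is step (4) in the case $v(\tan\alpha)<0$: a trapezoid is not a triangle, and one has to rule out configurations where the three colors are "spread around" the four vertices in a way that does not yield the needed rainbow subtriangle with the right area valuation. I expect this to require a careful sub-case analysis on which pair of vertices carries which color, using the two right angles of $P$ to constrain the coordinate differences, and possibly a separate argument when the slanted edges of distinct tiles are parallel versus anti-parallel. Controlling $\tan\alpha$ under the valuation — showing one may always arrange $v(\tan\alpha)\ge 0$ after a harmless affine rescaling, or else handle the residual case by hand — is the crux of the whole proof.
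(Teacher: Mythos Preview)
Your plan is entirely different from the paper's. The paper never touches valuations: it builds a directed \emph{hypotenuse graph} whose edges are the slanted sides of the tiles, oriented from the $\alpha$-vertex to the $\beta$-vertex, shows by an angle-pattern analysis that (for $\alpha\notin\{\pi/4,\pi/3\}$) this graph is component-wise Eulerian, and then proves that every ``feasible'' directed cycle has even length via an argument with complex units, whence $N$ is even. The cases $\alpha=\pi/4$ and $\alpha=\pi/3$ are treated separately, the latter by a delicate forbidden-configuration chase.

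Your Monsky-style plan has a genuine gap at the closing step. Recall how Monsky's contradiction actually arises: Sperner's lemma says the number of rainbow triangles in any triangulation of the square is \emph{odd}; meanwhile a rainbow triangle must have $v(\text{area})<0$, but each tile has area $1/N$ with $v(1/N)=0$, so there are \emph{zero} rainbow tiles---odd versus zero is the contradiction. In your setting you triangulate each trapezoid into two triangles of \emph{unequal} area, so the implication ``rainbow $\Rightarrow$ impossible area'' is lost. Your step~(4) runs in the wrong direction: showing each trapezoid \emph{contains} a rainbow subtriangle only yields ``number of rainbow triangles $\ge N$'', which is perfectly compatible with Sperner's ``odd'' when $N$ is odd. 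Your step~(5) promises that each tile contributes an ``even or controlled count'' of rainbow triangles, but a trapezoid whose four vertices carry exactly three colours can contribute either one or two rainbow triangles depending on which colour repeats and where; there is no per-tile parity. The single area constraint $v\bigl((a+b)h\bigr)=v(2)$ does not pin down $v(bh)$ or $v\bigl((a-b)h\bigr)$ separately, so you cannot force both subtriangles to be non-rainbow either.

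There is also a secondary problem with step~(1): congruent copies of $P$ need not have their parallel sides axis-aligned, and the hypotenuse directions occurring in the tiling are not a~priori confined to any finite set (the paper in fact devotes its cycle argument precisely to controlling these directions). You can still extend a $2$-adic valuation to all of $\mathbb R$, but then the quantity you propose to control is not a single $v(\tan\alpha)$ but one slope per tile orientation, and your case split loses its leverage.
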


To prove  Theorem \ref{thm:trapezoid},  we introduce a hypotenuse graph $G$ related to the tiling (\ref{eq:Danzer}).
We show that  Conjecture 1 is true if every connected component of the graph $G$ is Eulerian;
indeed, this is the case when
 $\alpha\neq \pi/3$.
If  $\alpha=\pi/3$, we need to investigate  carefully the forbidden configurations of tiles in  the   tiling (\ref{eq:Danzer}), which is the most difficult part of the proof of Theorem \ref{thm:trapezoid}.

There are some works on other dissection problems of a square into polygons, see for instance
  \cite{ Bor1988, Hag1983, Owi1985}.

\medskip
 The paper is organized as follows. In Section 2, we prove Theorem \ref{thm:six-gon}. In Section 3, we recall some results on  Eulerian graphs.
In Section 4, we define the hypotenuse graph and show that Conjecture 1 is true if the graph is Eulerian.
In Sections 5 and 6, we show that Conjecture 1 is true when $\alpha \neq \pi/3$ and $\alpha=\pi/3$, respectively. In Section 7, we pose several questions.

\section{\textbf{A convex $q$-gon cannot tile a rectangle when $q\geq 6$ }}
Let $\Omega$ be a rectangle   in the  plane. Let $q\geq 3$  and  $P$ be a convex $q$-gon.
Suppose
\begin{equation*}
   \Omega = \cup_{j=1}^N P_j
\end{equation*}
    is a tiling of $\Omega$, where $P_j$, $j\in\{1,\dots, N\}$,  are    congruent to $P$.

 Denote by $\mathcal{V}_\Omega$   the  set consisting of the four vertices of $\Omega$,  and $\mathcal{V}_j$  the vertex set of   $P_j$, $j\in\{1,\dots, N\}$. Let $\mathcal{V}=\cup_{j=1}^N \mathcal{V}_j$.
Clearly ${\mathcal{V}}_\Omega\subset \mathcal{V}$.

For  $w \in \mathcal{V}$, let
$$\mathcal{I}(w):= \{ j: w \in \mathcal{V}_j \};$$
namely, $\mathcal{I}(w)$ is the set of indices  of tiles having $w$ as a vertex.
For $j \in \mathcal{I}(w)$, denote by $\theta_j(w)$ the angle of the vertex $w$ in $P_j$.
Then, for $w \in \mathcal{V}$, we have
\begin{equation}
 \sum_{j \in \mathcal{I}(w)} \theta_j(w) =
     \begin{cases}
        \frac{\pi}{2} & \text{ if } w \in \mathcal{V}_\Omega;\\
        \pi  & \text{ if } w \text{ lies on an (open) side of  a tile or of $\Omega$};\\
        2\pi& \text{ otherwise. }
     \end{cases}
\end{equation}

Define
\begin{equation*}
{\mathcal F} = \{ w \in \mathcal{V}: \sum_{j \in \mathcal{I}(w)} \theta_j(w)=2\pi \}, \ \ {\mathcal H} = \{ w \in \mathcal{V}: \sum_{j \in \mathcal{I}(w)} \theta_j(w)=\pi \}.
\end{equation*}
For a set $A$, let $|A|$ denote its cardinality.
%Let $|A|$ denote the cardinality of the set $A$.

\begin{lemma}
Let
\begin{equation*}
  F=\sum_{w \in \mathcal{F}} |\mathcal{I}(w)|,\  H= \sum_{w \in \mathcal{H}} |\mathcal{I}(w)|,  \ \text{ and } \ \hbar=\sum_{w \in \mathcal{V}_\Omega} |\mathcal{I}(w)|.
\end{equation*}
Then
\begin{equation}
\label{relationofvertices}
\frac{2 |\mathcal{F}| + |\mathcal{H}| + 2}{F+H+\hbar} = \frac{q-2}{q}.
\end{equation}
\end{lemma}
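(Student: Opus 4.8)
The identity is an Euler-type counting relation, and the plan is to obtain it by evaluating two natural quantities in two ways each: the total number of (tile, vertex) incidences, and the total angle accumulated over all tiles.

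First I would record the structural fact that $\mathcal V$ splits as a disjoint union $\mathcal V = \mathcal F \sqcup \mathcal H \sqcup \mathcal V_\Omega$. Indeed, every $w \in \mathcal V$ is either interior to $\Omega$, or on an edge of $\Omega$ but not a corner, or a corner of $\Omega$, and in these three cases the displayed angle equation forces $\sum_{j\in\SI(w)}\theta_j(w)$ to equal $2\pi$, $\pi$, $\tfrac\pi2$ respectively, i.e. $w\in\mathcal F$, $w\in\mathcal H$, $w\in\mathcal V_\Omega$. Here one uses that $\mathcal V_\Omega\subset\mathcal V$ (the four corners are genuine tile vertices, not relative-interior points of tile edges, since a tile meeting a corner of the square in an edge-interior would protrude from $\Omega$), and that $|\mathcal V_\Omega| = 4$.

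Next, counting incidences: since each $P_j$ is congruent to the $q$-gon $P$, it has exactly $q$ vertices, so
\[
F+H+\hbar \;=\; \sum_{w\in\mathcal V}|\SI(w)| \;=\; \bigl|\{(w,j):\, w\in\mathcal V_j\}\bigr| \;=\; \sum_{j=1}^N |\mathcal V_j| \;=\; Nq .
\]
Counting angles: the interior angles of a convex $q$-gon sum to $(q-2)\pi$, hence
\[
N(q-2)\pi \;=\; \sum_{j=1}^N \sum_{w\in\mathcal V_j}\theta_j(w) \;=\; \sum_{w\in\mathcal V}\;\sum_{j\in\SI(w)}\theta_j(w) \;=\; 2\pi|\mathcal F| + \pi|\mathcal H| + \tfrac\pi2\cdot 4 ,
\]
where the last step evaluates the inner sum by the three cases above. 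Dividing by $\pi$ gives $2|\mathcal F| + |\mathcal H| + 2 = N(q-2)$.

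Combining the two displays yields
\[
\frac{2|\mathcal F| + |\mathcal H| + 2}{F+H+\hbar} \;=\; \frac{N(q-2)}{Nq} \;=\; \frac{q-2}{q},
\]
which is the claim. There is no real obstacle here; the only points deserving a sentence of justification are that the three sets $\mathcal F,\mathcal H,\mathcal V_\Omega$ actually partition $\mathcal V$ and that each square-corner contributes to $\mathcal V$ with angle sum exactly $\pi/2$ — everything else is a direct double count.
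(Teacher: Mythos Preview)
Your proof is correct and follows essentially the same approach as the paper: both arguments compute the total angle sum $\sum_{j}\sum_{w\in\mathcal V_j}\theta_j(w)$ and the total incidence count $\sum_j|\mathcal V_j|$ in two ways and take the ratio. You are slightly more explicit in justifying that $\mathcal F,\mathcal H,\mathcal V_\Omega$ partition $\mathcal V$, which the paper leaves implicit, but the content is the same.
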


\begin{proof}
Notice that the sum of angles of $P$ is $(q-2)\pi$.
Since
\begin{equation*}
\begin{array}{rl}
\sum_{w \in \mathcal{F}} \sum_{j \in \mathcal{I}(w)} \theta_j(w) &=2 |\mathcal{F}| \pi,\\
\sum_{w \in \mathcal{H}} \sum_{j \in \mathcal{I}(w)} \theta_j(w) &= |\mathcal{H}| \pi,\\
\sum_{w \in \mathcal{V}_\Omega} \sum_{j \in \mathcal{I}(w)} \theta_j(w)&=2\pi,
\end{array}
\end{equation*}
we infer that
\begin{equation*}
  2 |\mathcal{F}| \pi +  |\mathcal{H}| \pi+2\pi=N(q-2)\pi.
\end{equation*}
On the other hand we have
\begin{equation*}
  F+H+\hbar=\sum_{j=1}^N |\mathcal{V}_j| =N\cdot q.
\end{equation*}
Taking the ratio of the above two equations now yields the lemma.
\end{proof}

\begin{lemma}
Let
\begin{equation}
\Delta=F+H+\hbar-3 |\mathcal{F}|-2 |\mathcal{H}|-|\mathcal{V}_\Omega|.
\end{equation}
Then $\Delta \geq 0$ and
\begin{equation}
\label{relation_of_vertices_2}
(q-6) |\mathcal{F}| + (q-4) |\mathcal{H}| + (q-2)\Delta + 2q=8.
\end{equation}
\end{lemma}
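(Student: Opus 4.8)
The plan is to derive \eqref{relation_of_vertices_2} by combining the identity \eqref{relationofvertices} from the previous lemma with the definition of $\Delta$, and to establish $\Delta\geq 0$ by a local counting argument at each vertex. For the inequality $\Delta\geq 0$, I would rewrite $\Delta = \sum_{w\in\mathcal F}(|\SI(w)|-3) + \sum_{w\in\mathcal H}(|\SI(w)|-2) + \sum_{w\in\mathcal V_\Omega}(|\SI(w)|-1)$, so it suffices to check that each summand is nonnegative. A vertex $w\in\mathcal F$ (full angle $2\pi$) must be surrounded by at least $3$ tiles, since each tile contributes an angle strictly less than $\pi$ (as $P$ is convex with $q\geq 3$, every angle is less than $\pi$); in fact even two angles summing to $2\pi$ is impossible for a convex polygon. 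A vertex $w\in\mathcal H$ (straight angle $\pi$) must be covered by at least $2$ tiles for the same reason: a single convex-polygon angle equals $\pi$ only in degenerate cases, so $|\SI(w)|\geq 2$. Finally, each corner $w\in\mathcal V_\Omega$ obviously lies in at least one tile, so $|\SI(w)|\geq 1$. Hence $\Delta\geq 0$.

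For the identity \eqref{relation_of_vertices_2}, I would start from \eqref{relationofvertices}, namely $q(2|\mathcal F|+|\mathcal H|+2) = (q-2)(F+H+\hbar)$, and substitute $F+H+\hbar = \Delta + 3|\mathcal F| + 2|\mathcal H| + |\mathcal V_\Omega|$ with $|\mathcal V_\Omega| = 4$. Expanding both sides gives
\begin{equation*}
2q|\mathcal F| + q|\mathcal H| + 2q = (q-2)\bigl(\Delta + 3|\mathcal F| + 2|\mathcal H| + 4\bigr),
\end{equation*}
and collecting terms in $|\mathcal F|$, $|\mathcal H|$, $\Delta$ yields $(q-6)|\mathcal F| + (q-4)|\mathcal H| + (q-2)\Delta + 2q = 8$ after moving everything to one side. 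This is a routine rearrangement; the only care needed is tracking the constant $(q-2)\cdot 4 - 2q = 2q - 8$, which is exactly what produces the $8$ on the right.

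I do not anticipate a serious obstacle here; the lemma is essentially bookkeeping plus the elementary observation about convex angles. The one point deserving attention is the justification that a convex $q$-gon has all interior angles strictly less than $\pi$ — this is where the convexity hypothesis on $P$ (and the fact that $P$ is a genuine $q$-gon, i.e. no straight angles among its own vertices) is used — and one should be slightly careful about vertices of $\Omega$'s tiling where a tile $P_j$ meets $w$ along an edge rather than at a vertex; but such a point contributes angle $\pi$ from that single tile to the sum $\sum_{j\in\SI(w)}\theta_j(w)$ only if it is counted as an interior point of an edge of $P_j$, in which case by convention it is not in $\mathcal V_j$ and does not affect $\SI(w)$. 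So the local lower bounds on $|\SI(w)|$ hold as stated, and the proof reduces to the algebraic manipulation above.
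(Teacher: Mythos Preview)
Your proof is correct and follows essentially the same approach as the paper: you establish $\Delta\ge 0$ via the convexity bound on each interior angle (giving $|\SI(w)|\ge 3,2,1$ on $\mathcal F,\mathcal H,\mathcal V_\Omega$ respectively), and you obtain \eqref{relation_of_vertices_2} by substituting $F+H+\hbar=\Delta+3|\mathcal F|+2|\mathcal H|+4$ into \eqref{relationofvertices} and rearranging. The paper's argument is identical in substance, just phrased slightly more tersely.
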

\begin{proof}
Since all angles of $P$ are less than $\pi$, we  have $|\mathcal{I}(w)|\geq 3$
   for $w\in \mathcal{F}$, and  $|\mathcal{I}(w)| \geq 2$  for $w\in \mathcal{H}$.
    It follows that $F \geq 3 |\mathcal{F}|, H \geq 2 |\mathcal{H}|$ and $\hbar \geq 4,$
    which imply that $\Delta\geq 0$.

By  \eqref{relationofvertices}  we have
\begin{equation*}
\begin{array}{rl}
2q |\mathcal{F}| + q |\mathcal{H}| + 2q &=(q-2)(F+H+\hbar)\\
              &=(q-2)(3 |\mathcal{F}| + 2 |\mathcal{H}| + 4 + \Delta).
\end{array}
\end{equation*}
Rearranging the terms of the above equation, we obtain \eqref{relation_of_vertices_2}.
The lemma is proved.
\end{proof}

 \medskip
  \noindent \textbf{Proof of Theorem \ref{thm:six-gon}.}
  If $q\geq 6$, then
  the left hand side of  \eqref{relation_of_vertices_2} is no less than $12$, which is absurd.
  $\Box$
  
  \section{\textbf{Eulerian graphs}}

In this section we recall some notions and results of graph theory. See  \cite{Bal2000}.

%\subsection{Directed graph}
Let $G=(V, \Gamma)$ be a
directed graph, where $V$ is the \emph{vertex set} and $\Gamma$ is the \emph{edge set}.
Each edge $\gamma$ is associated to an ordered pair $(u,v)$ in $V$,   and we say
$\gamma$ is  \emph{incident out} of $u$ and \emph{incident into} $v$.
 We also call $u$ and $v$ the \emph{origin} and \emph{terminus} of $\gamma$, respectively.
 The number of edges incident out of a vertex $u$ is the \emph{outdegree} of $u$ and is denoted by $\deg^+(u)$. The number of edges incident into a vertex $u$ is the \emph{indegree} of $u$ and is denoted by $\deg^-(u)$.
We remark that in the graph $G$ we allow multi-edges from a vertex $u$ to $v$.

A \emph{directed walk} joining the vertex $v_0$ to the vertex $v_k$ in $G$ is an alternating sequence $v_0 \gamma_1 v_1 \gamma_2 v_2 \dots \gamma_k v_k$ with $\gamma_i$ incident out of $v_{i-1}$ and incident into $v_i$.

Similarly, for an undirected graph, we can define \emph{trail}, \emph{path} and \emph{cycle}, see  \cite{Bal2000}.

$G$ is \emph{connected} if for any $u,v\in V$, there is a path joining $u$ and $v$.
A connected graph $G$ is called \emph{Eulerian} if there is a closed trail containing all the edges of $G$.

% Similar as undirected graph, we can define connectedness and the Eulerian property of a directed graph.

  \begin{theorem}\label{thm:tour-directed} Let $G=(V, \Gamma)$ be a connected directed graph. The following three statements are all equivalent:

 $(i)$ For every $u\in V$, $\deg^+(u)=\deg^-(u)$.

 $(ii)$ $G$ is Eulerian.

 $(iii)$  $G$ is a  union of  edge-disjoint directed cycles.
\end{theorem}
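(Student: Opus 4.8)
The plan is to establish the cycle of implications $(ii)\Rightarrow(i)\Rightarrow(iii)\Rightarrow(ii)$, which is a directed analogue of Euler's classical circuit theorem. For $(ii)\Rightarrow(i)$: let $w=v_0\gamma_1 v_1\cdots\gamma_m v_m$ with $v_0=v_m$ be a closed trail using every edge of $G$ exactly once. Reading $w$ cyclically, each occurrence of a vertex $u$ among $v_0,\dots,v_{m-1}$ is immediately preceded by one edge incident into $u$ and immediately followed by one edge incident out of $u$; since $w$ uses each edge once, this pairs up all edges at $u$, so $\deg^+(u)=\deg^-(u)$.

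For $(i)\Rightarrow(iii)$ I would induct on $|\Gamma|$, and crucially I would not assume connectedness here, since the graph obtained after deleting edges may fail to be connected. If $\Gamma=\emptyset$ the statement holds vacuously. Otherwise start a walk at the terminus of an arbitrary edge and repeatedly leave the current vertex along an edge not yet used by the walk. Whenever the walk reaches a vertex $u$ other than its start, it has entered $u$ one more time than it has left $u$, and since $\deg^-(u)=\deg^+(u)$ there remains an unused out-edge at $u$; hence the walk can only halt at its start vertex, which by finiteness of $\Gamma$ it must do. From the resulting closed walk extract a directed cycle $C$. Deleting $E(C)$ lowers both $\deg^+$ and $\deg^-$ by $1$ at each vertex of $C$ and leaves all other degrees unchanged, so the smaller graph again satisfies $(i)$; by the inductive hypothesis it is an edge-disjoint union of directed cycles, and adjoining $C$ completes the step.

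For $(iii)\Rightarrow(ii)$, which is the only place the connectedness of $G$ is needed, I would take a closed trail $W$ in $G$ with as many edges as possible; such a $W$ exists and is nonempty because $(iii)$ supplies at least one directed cycle. Suppose for contradiction that $W$ omits an edge, and put $G'=(V,\Gamma\setminus E(W))$. A closed trail enters and leaves each vertex equally often, so $G$ being a union of edge-disjoint directed cycles (hence balanced at every vertex) forces $G'$ to be balanced as well, i.e.\ $G'$ satisfies $(i)$. If no edge of $G'$ were incident to a vertex of $W$, then the edges of $W$ and those of $G'$ would be supported on disjoint vertex sets, contradicting connectedness of $G$; so some vertex $v$ of $W$ is incident to an edge of $G'$. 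Running the walk-extraction argument inside $G'$ from $v$ produces a closed trail $C\subseteq G'$ through $v$, edge-disjoint from $W$. Splicing $C$ into $W$ at $v$ (traverse $W$ from $v$ back to $v$, then $C$ from $v$ back to $v$) gives a closed trail with strictly more edges, a contradiction. Hence $W$ uses every edge and $G$ is Eulerian.

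The routine parts are $(ii)\Rightarrow(i)$ and the induction in $(i)\Rightarrow(iii)$; the main obstacle is $(iii)\Rightarrow(ii)$, where one must use connectedness to guarantee that the maximal closed trail actually meets the remaining edges, and must verify that a balanced subgraph contains a suitable closed trail through a prescribed vertex so that the splicing step genuinely lengthens the trail.
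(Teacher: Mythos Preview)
Your proof is correct and follows the standard textbook route for the directed version of Euler's theorem. Note, however, that the paper does not supply its own proof of this statement: Theorem~\ref{thm:tour-directed} is quoted as a known result with a reference to the graph theory textbook of Balakrishnan and Ranganathan, so there is no in-paper argument to compare against. Your write-up would serve perfectly well as a self-contained justification; the only minor remark is that in the step $(i)\Rightarrow(iii)$ the closed walk you build is already a closed trail (you never repeat edges), so ``extract a directed cycle'' can be done simply by taking the portion between the first repeated vertex and its earlier occurrence.
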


\begin{remark} {\rm For an undirected graph, similar results hold, see for instance, \cite{Bal2000};
we will need such results  in Section 6.
}
\end{remark}

To close this section, we give a definition. We say that a (directed or undirected) graph is
\emph{component-wise Eulerian}, if every connected component of the graph is Eulerian.

\section{\textbf{Tiling a square with congruent  right-angle trapezoids}}

Let $\Omega$ be a square in $\mathbb{R}^2$.
   Let $P$ be a trapezoid   with angles $(\alpha, \pi-\alpha, \pi/2, \pi/2)$, where $0< \alpha < \pi/2$; see Figure \ref{fig_tile}. Let
\begin{equation}\label{eq:tiling2}
\Omega=\cup_{j=1}^{N} P_j,
\end{equation}
be a tiling of $\Omega$,
where $P_j$ are all congruent to $P$.
Let $\phi_j$ be the isometry  such that $P_j=\phi_j(P)$, $ j\in\{1,\dots, N\}$.
The rest of the paper proves that $N$ is an even number.

\begin{figure}[!htbp]
\centering
\includegraphics[width=0.4\textwidth]{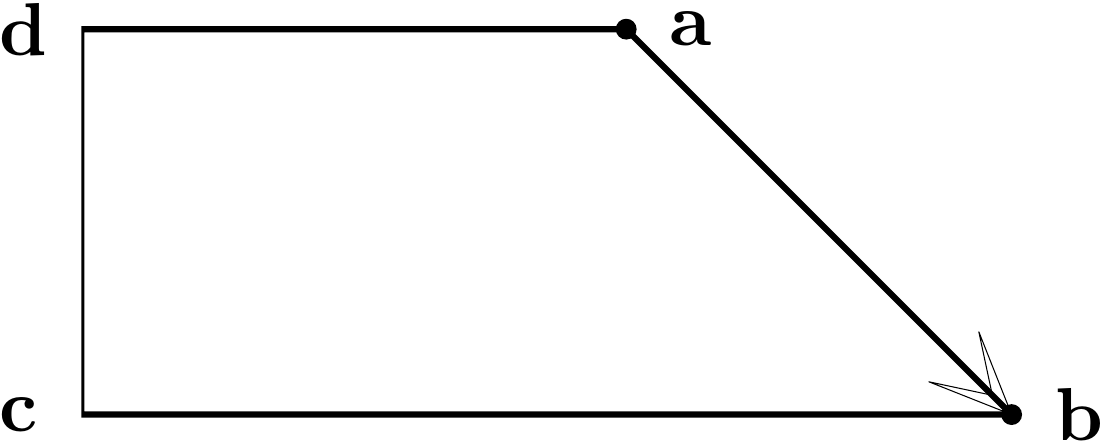}
\caption{The trapezoid $P$.}
\label{fig_tile}
\end{figure}

\subsection{Hypotenuse graph}
We denote the vertices of $P$ by $\mathbf{a},\mathbf{b},\mathbf{c}$ and $\mathbf{d}$; see  Figure \ref{fig_tile}.
The   line segment $\overline{[\mathbf{a},\mathbf{b}]}$ is called the \emph{hypotenuse} of $P$.
We shall define a directed graph consisting of the (directed) hypotenuses of $P_j$, $j\in\{1,\dots, N\}$.
More precisely, let
\begin{equation*}
V=\bigcup_{j=1}^N \{ \phi_j(\mathbf{a}), \phi_j(\mathbf{b})\}
\end{equation*}
be the vertex set.
For each $j\in \{1,\dots, N\}$, we define a directed edge $\tau_j$
with origin $\phi_j(\mathbf{a})$ and terminus $\phi_j(\mathbf{b})$. To emphasize that the edge $\tau_j$ is motivated by 
the hypotenuse of $P_j$, we can even denote $\tau_j$ by the triple
\begin{equation}\label{def:tau}
\tau_j=(\phi_j(\mathbf{a}),\phi_j(\mathbf{b}),P_j)
\end{equation}
Let
\begin{equation*}
 \ \Gamma=\{  \tau_j;~ 1 \leq j \leq N \}
\end{equation*}
 be the set of edges. We call $(V, \Gamma)$ the \emph{hypotenuse graph} of the tiling \eqref{eq:tiling2}.
(It may happen that two different edges have the same origin and terminus, which explains why we put $P_j$ as the third entry of $\tau_j$  in \eqref{def:tau}.)

 The goal of this section is to prove the following:

\begin{theorem}\label{thm:euler} If the hypotenuse graph $(V, \Gamma)$ of the tiling \eqref{eq:tiling2} is component-wise Eulerian,
then $N$ is even.
\end{theorem}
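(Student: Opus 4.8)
The plan is to exploit the Eulerian hypothesis exactly as Theorem~\ref{thm:tour-directed} invites. Since every connected component of $(V,\Gamma)$ is Eulerian, each component — and hence $(V,\Gamma)$ itself — is a union of edge‑disjoint directed cycles $C_1,\dots,C_r$. As the edge set is $\{\tau_1,\dots,\tau_N\}$, we have $N=\sum_{\ell=1}^{r}|C_\ell|$, so it suffices to prove that each directed cycle $C_\ell$ contains an even number of edges. Everything is thereby reduced to a statement about a single directed cycle.

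Fix such a cycle, written $v_0\xrightarrow{\,\tau_{j_1}\,}v_1\xrightarrow{\,\tau_{j_2}\,}\cdots\xrightarrow{\,\tau_{j_m}\,}v_m=v_0$, so that $v_{i-1}=\phi_{j_i}(\ba)$ and $v_i=\phi_{j_i}(\bb)$ for each $i$. Put $u_i=v_i-v_{i-1}$, the directed hypotenuse vector of $P_{j_i}$. Because each $\phi_{j_i}$ is an isometry, $|u_i|=|\bb-\ba|=:L$ for every $i$; because the walk is closed, $u_1+\cdots+u_m=0$. Thus we must show that a list of $m$ vectors of common length $L$, each of them the hypotenuse vector of some tile in the tiling \eqref{eq:tiling2}, can sum to $0$ only when $m$ is even. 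I would also record the local picture at each $v_i$: the tile $P_{j_i}$ occupies the angle $\alpha$ at its vertex $\bb$ and $P_{j_{i+1}}$ occupies the angle $\pi-\alpha$ at its vertex $\ba$, so these two tiles already account for an angle $\pi$ at $v_i$; hence $v_i$ is not a corner of $\Omega$, the full angle at $v_i$ is $\pi$ or $2\pi$, and the two hypotenuses through $v_i$ must fit together inside that angle alongside the remaining right‑angled corners meeting there. Propagated around the cycle, this local constraint is what limits the possible directions of the $u_i$.

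The heart of the argument — and the step I expect to be the real obstacle — is to turn the above into the assertion that the directions of the hypotenuse vectors occurring in the tiling are confined to a finite set $D\subset\R^2$ of length‑$L$ vectors that is symmetric under $u\mapsto -u$ (e.g.\ via a lemma that each tile is placed in one of finitely many orientations, using both the local angle bookkeeping and the rigidity of a square tiled by a fixed right‑angle trapezoid, where reflections of $P$ are also allowed). Granting this, a relation $u_1+\cdots+u_m=0$ with all $u_i$ in $D$ forces $m$ to be even: for all but finitely many values of $\alpha$ the distinct directions of $D$ are rationally independent, so within each antipodal direction the $+$ and $-$ vectors must balance and contribute an even count; for the finitely many exceptional $\alpha$ — the delicate case being $\alpha=\pi/3$, where additional linear relations among the directions appear — one inspects the surviving cancellation patterns by hand and checks that $m$ is still even. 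This gives $|C_\ell|$ even for every $\ell$, and therefore $N=\sum_\ell|C_\ell|$ is even. It is precisely the $\alpha=\pi/3$ analysis that is expected to be the bottleneck, and this is why that case is isolated and later handled through a separate study of forbidden tile configurations, in a regime where the hypotenuse graph may fail to be component‑wise Eulerian so that the present theorem no longer applies.
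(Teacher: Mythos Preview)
Your overall architecture---decompose $\Gamma$ into edge-disjoint directed cycles and show each cycle has even length via the closed-walk identity $\sum_i u_i=0$---is exactly the paper's strategy. But two of your load-bearing steps are missing or wrong, and the paper supplies different mechanisms for both.

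First, the restriction of the hypotenuse directions to a finite set $D$ is not a global fact about the tiling that one can simply cite; the paper obtains it only \emph{along a cycle}, and only after replacing arbitrary Eulerian cycles by \emph{feasible} cycles. At a vertex with angle pattern $(\alpha,\alpha,\beta,\beta)$ there are two incoming and two outgoing hypotenuses, and a generic cycle decomposition may pair them ``crosswise''; the paper instead merges the edges $\ell_{u,\alpha},\ell_{u,\beta}$ and $r_{u,\alpha},r_{u,\beta}$ into single edges (the graph $\Gamma^\ast$), forcing the pairing in which the $\alpha$- and $\beta$-tiles meet along a straight angle. It is precisely this pairing that makes Lemma~\ref{thm_compatible} go through, and that lemma is what propagates the constraint $\arg\vec\gamma_i\in\{0,\pi/2,\pi,3\pi/2\}\oplus\{0,\beta\}$ along the cycle (relative to a coordinate frame chosen from $K_1$). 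Your ``local angle bookkeeping'' paragraph is the beginning of this, but without the feasibility device you cannot rule out the crosswise transition at $(\alpha,\alpha,\beta,\beta)$, and the direction-propagation breaks there.

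Second, even granting the eight directions $\{\pm 1,\pm i,\pm\omega,\pm i\omega\}$ with $\omega=e^{i\alpha}$, your plan ``for all but finitely many $\alpha$ the directions are rationally independent, and the exceptions are handled by hand'' does not work: there are infinitely many $\alpha\in(0,\pi/2)$ with $\cos\alpha,\sin\alpha\in\mathbb Q$, and for every such $\alpha$ the four vectors $1,i,\omega,i\omega$ are $\mathbb Q$-linearly dependent in $\mathbb R^2$. The paper bypasses this entirely with a uniform parity trick: writing the closure condition as $a+bi+(c+di)\omega=0$ with $a,b,c,d\in\mathbb Z$, one gets $|a+bi|=|c+di|$, hence $a^2+b^2=c^2+d^2$, hence $a+b+c+d$ is even and so is $m$. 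This argument is valid for every $\alpha\in(0,\pi/2)$, including $\alpha=\pi/3$; the special analysis of $\alpha=\pi/3$ in the paper concerns only the situation where $(V,\Gamma)$ fails to be component-wise Eulerian, not the present theorem.
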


For brevity, we use $\beta$ to denote $\pi-\alpha$ hereafter.

Let $u \in V$.
   If $\theta$ is the  angle of a tile $P_j$ at the vertex $u$, then  we say $\theta$ is an $\textit{angle around}$ $u$.
    If $\theta_1,\dots, \theta_k$ are the  angles around $u$
     arranged in the clock-wise order, then we call $(\theta_1,\dots, \theta_k)$ the \emph{angle pattern} at $u$.

  \begin{lemma}\label{lem:pattern} $(V,\Gamma)$ is component-wise Eulerian if and only if for each $u\in V$, the angle pattern at $u$ falls into
  \begin{equation}\label{eq:angle pattern}
  (\alpha,\beta), \
 (\alpha,\beta,\alpha,\beta), \
 (\alpha,\alpha,\beta,\beta), \   (\alpha,\beta,\pi/2,\pi/2),  \ (\alpha,\pi/2,\beta,\pi/2)
  \end{equation}
  up to a rotation or a reversion.
  \end{lemma}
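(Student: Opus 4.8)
The plan is to convert ``component-wise Eulerian'' into a local balance condition at every vertex by means of Theorem~\ref{thm:tour-directed}, and then to classify the admissible local angle configurations by elementary counting. Fix the labelling of $P$ so that the angle at $\ba$ is $\alpha$, the angle at $\bb$ is $\beta=\pi-\alpha$, and the two right angles lie at $\bc,\bd$. The first step is the elementary remark that near a point $u$ each tile $P_j$ is either disjoint from a neighbourhood of $u$ or fills a single closed angular sector with apex $u$, and that this sector has angle $\alpha$ when $\phi_j(\ba)=u$, angle $\beta$ when $\phi_j(\bb)=u$, angle $\pi/2$ when $\phi_j(\bc)=u$ or $\phi_j(\bd)=u$, and angle $\pi$ when $u$ lies in the relative interior of an edge of $P_j$. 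Comparing the first two cases with the definition \eqref{def:tau} of $\tau_j$, one sees that for every $u\in V$ the out-degree $\deg^{+}(u)$ equals the number of $\alpha$'s in the angle pattern at $u$ and the in-degree $\deg^{-}(u)$ equals the number of $\beta$'s there. Since $V=\bigcup_{j}\{\phi_j(\ba),\phi_j(\bb)\}$ the graph has no isolated vertex, so Theorem~\ref{thm:tour-directed}, applied to each connected component, shows that $(V,\Gamma)$ is component-wise Eulerian if and only if $\deg^{+}(u)=\deg^{-}(u)$ for every $u\in V$, i.e.\ if and only if the angle pattern at each $u\in V$ has as many $\alpha$'s as $\beta$'s.

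With this reduction in place, the ``if'' direction is immediate, since each of the five patterns in \eqref{eq:angle pattern} does have equally many $\alpha$'s and $\beta$'s. For the ``only if'' direction I would fix $u\in V$, assume the balance just described, and invoke the angle-sum identity (as recorded in Section~2): the entries of the angle pattern at $u$, together with one straight angle $\pi$ for each tile meeting $u$ in the relative interior of an edge, add up to $\pi/2$, $\pi$ or $2\pi$ according as $u$ is a vertex of $\Omega$, a boundary point of $\Omega$ distinct from its vertices, or an interior point of $\Omega$. Writing $n$ for the common number of $\alpha$'s and $\beta$'s at $u$, $n_r$ for the number of right angles, and $m\ge 0$ for the number of such edge-interior tiles, and using $\alpha+\beta=\pi$, everything reduces to $2n+n_r+2m\in\{1,2,4\}$. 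Because $u\in V$ is the $\ba$- or the $\bb$-image of some tile, balance forces $n\ge 1$; this eliminates the value $1$ (so $u$ is not a vertex of $\Omega$) and leaves only $(n,n_r,m)\in\{(1,0,0),\ (2,0,0),\ (1,2,0),\ (1,0,1)\}$, i.e.\ the angle multiset at $u$ is one of $\{\alpha,\beta\}$, $\{\alpha,\alpha,\beta,\beta\}$ or $\{\alpha,\beta,\pi/2,\pi/2\}$. Listing the distinct cyclic orders of these three multisets --- one for the first; the two orders $(\alpha,\alpha,\beta,\beta)$ and $(\alpha,\beta,\alpha,\beta)$ for the second; and, after identifying a cyclic word with its reverse (the angle pattern records only the cyclic order of the tiles around $u$, readable either way), the two orders $(\alpha,\beta,\pi/2,\pi/2)$ and $(\alpha,\pi/2,\beta,\pi/2)$ for the third --- one recovers precisely the list in \eqref{eq:angle pattern}.

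I do not anticipate a serious obstacle: once Theorem~\ref{thm:tour-directed} is available the argument is bookkeeping, in contrast with the forbidden-configuration analysis that the case $\alpha=\pi/3$ will demand later. The points requiring care are (a) the identity relating $\deg^{+}(u)$ (resp.\ $\deg^{-}(u)$) to the number of $\alpha$-entries (resp.\ $\beta$-entries) of the angle pattern at $u$, which rests on a convex tile occupying a single sector at each of its vertices together with the orientation of $\tau_j$ from the $\alpha$-vertex to the $\beta$-vertex --- and note, as a by-product, that no vertex of $\Omega$ lies in $V$ once the graph is Eulerian; (b) the bookkeeping of the term $m\pi$, in particular the fact that $m\le 1$ whenever $u\in V$, because $m\ge 2$ would already exhaust the available $2\pi$ with no room for a vertex angle; and (c) the convention underlying \eqref{eq:angle pattern}, where $(\alpha,\beta,\pi/2,\pi/2)$ and $(\alpha,\pi/2,\pi/2,\beta)$ are mutual reverses and so count as one pattern up to rotation. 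None of these is deep.
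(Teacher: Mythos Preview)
Your proposal is correct and follows essentially the same approach as the paper: reduce the Eulerian property to the local balance $\deg^{+}(u)=\deg^{-}(u)$ via Theorem~\ref{thm:tour-directed}, identify out- and in-degree with the multiplicities of $\alpha$ and $\beta$ in the angle pattern, and then enumerate the admissible patterns. The paper's own argument is the same in outline but much terser; in particular, your explicit bookkeeping of the edge-interior term $m$ and your remark that the list \eqref{eq:angle pattern} is tacitly taken up to rotation \emph{and} reversal (so that $(\alpha,\beta,\pi/2,\pi/2)$ and $(\alpha,\pi/2,\pi/2,\beta)$ are identified) fill in details the paper leaves to the reader.
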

  \begin{proof} Suppose $(V,\Gamma)$ is component-wise Eulerian.
  For $u\in V$, an angle $\alpha$ at $u$ determines an incoming edge, and an angle $\beta$ at $u$ determines
  an outgoing edge. So the angle pattern at $u$ contains either one $\alpha$ and one $\beta$, or two $\alpha$ and two $\beta$. So  the angle pattern at $u$ falls into \eqref{eq:angle pattern} up to a rotation.

  Assume that all the angle patterns fall into \eqref{eq:angle pattern}. Then $\deg^-(u)=\deg^+(u)$, so $(V,\Gamma)$
  is component-wise Eulerian.
  \end{proof}

\subsection{Pairing principle and feasible cycles}
In the rest of this section, we will always assume that $(V, \Gamma)$ is   component-wise Eulerian.
Let
\begin{equation*}
V_1 = \{ u \in V: \text{the angle pattern at}\  u\  \text{is}\  (\alpha,\alpha,\beta,\beta) \text{ up to a rotation}\}.
\end{equation*}
For each $u \in V_1$,  we denote the tiles around $u$ corresponding to $(\alpha, \alpha, \beta,\beta)$
by
\begin{equation*}
(L_{u,\alpha}, R_{u,\alpha}, R_{u,\beta}, L_{u,\beta}).
\end{equation*}
Then  the $\alpha$ angle of $L_{u, \alpha}$ and the angle $\beta$ in  $L_{u, \beta}$  form an angle measuring $\pi$,
 and so do $\alpha$ in $R_{u, \alpha}$ and  $\beta$ in $R_{u, \beta}$.
 Denote the edges of $\Gamma$ associated to
 $L_{u,\alpha}, R_{u,\alpha}, R_{u,\beta}, L_{u,\beta}$ by   $\ell_{u,\alpha}, r_{u, \alpha}, r_{u,\beta}$ and $\ell_{u, \beta}$,  respectively.  See Figure \ref{fig_aabb}(a).

\begin{figure}[h]
\centering
\subfigure[]
{\includegraphics[width=0.5\textwidth]{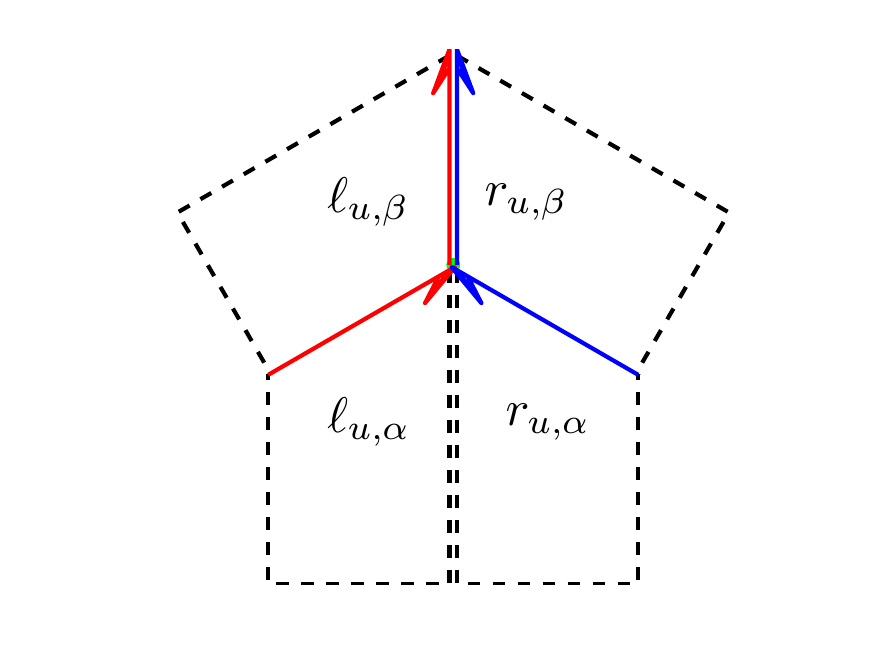}}
\subfigure[]
{\includegraphics[width=0.4\textwidth]{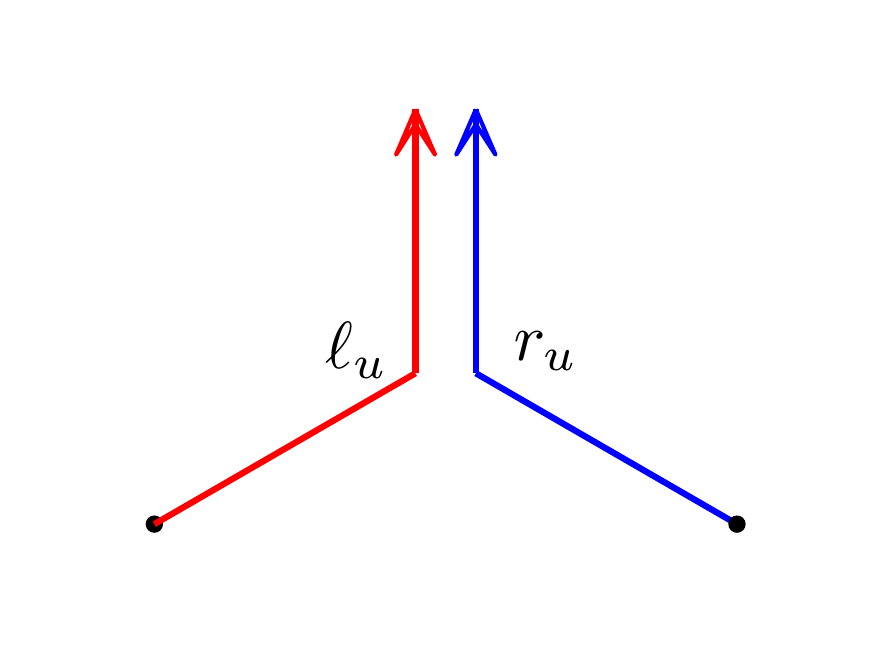}}
\caption{Pairing rule for the angle pattern  $(\alpha,\alpha,
\beta,\beta)$. There are essentially four cases depending on the relative positions of $\alpha$ and $\beta$ in a tile, here we illustrate only one case.}
\label{fig_aabb}
\end{figure}

We regard the path $\ell_{u,\alpha}+\ell_{u,\beta}$ as a single edge, and denote it by
$\ell_u$; similarly, define $r_u=r_{u,\alpha}+r_{u,\beta}$,
see Figure \ref{fig_aabb}(b). Replacing  the old edges by these new edges, we obtain a new graph
\begin{equation}\label{eq:new graph}
\Gamma^\ast = (\Gamma \setminus \cup_{u \in V_1} \{  \ell_{u, \alpha},r_{u, \alpha},\ell_{u, \beta},r_{u, \beta}\}) \cup (\cup_{u \in V_1}\{\ell_u,r_u\}),
\end{equation}
where the corresponding vertex set is $V^\ast=V \setminus V_1$.

A  cycle in $(V,\Gamma)$ is called  \emph{feasible} if it is also a   cycle in $(V^\ast, \Gamma^\ast)$.

Clearly, $(V^*, \Gamma^*)$ is still  component-wise Eulerian since  for each $u\in V^*$,
 the degrees of $u$ in $\Gamma$ and $\Gamma^*$ are the same.
 Therefore, we have

 \begin{lemma}\label{lem:disjoint} $(V,\Gamma)$ is a union of edge-disjoint  feasible cycles.
 \end{lemma}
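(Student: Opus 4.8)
The plan is to push the cycle decomposition of Theorem~\ref{thm:tour-directed} from $(V^\ast,\Gamma^\ast)$ back down to $(V,\Gamma)$. The graph $(V^\ast,\Gamma^\ast)$ is component-wise Eulerian: this is exactly the remark made just above the statement, the point being that merging $\ell_{u,\alpha},\ell_{u,\beta}$ into $\ell_u$ and $r_{u,\alpha},r_{u,\beta}$ into $r_u$ at each $u\in V_1$ deletes the vertex $u$ but changes neither $\deg^+$ nor $\deg^-$ at any surviving vertex, and every vertex of $V$ is balanced because $(V,\Gamma)$ is component-wise Eulerian. Hence each connected component of $(V^\ast,\Gamma^\ast)$ is a connected balanced directed graph, so Theorem~\ref{thm:tour-directed} presents it as a union of edge-disjoint directed cycles; altogether $\Gamma^\ast=C_1^\ast\sqcup\cdots\sqcup C_m^\ast$ with $C_1^\ast,\dots,C_m^\ast$ edge-disjoint directed cycles.

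Next I would unfold each $C_i^\ast$ back into $(V,\Gamma)$: keep the edges of $C_i^\ast$ that already lie in $\Gamma$, and replace every occurrence of a merged edge $\ell_u$ (resp.\ $r_u$) by the length-two path $\ell_{u,\beta},\ell_{u,\alpha}$ (resp.\ $r_{u,\beta},r_{u,\alpha}$) that it abbreviates. The resulting closed trail $C_i$ in $(V,\Gamma)$ contracts, by construction, to the cycle $C_i^\ast$ of $(V^\ast,\Gamma^\ast)$, so it is a feasible cycle. The $C_i$ are pairwise edge-disjoint, since the $C_i^\ast$ are and the unfolding is injective on edges; and they exhaust $\Gamma$, since every edge of $\Gamma$ either survives into $\Gamma^\ast$—and so lies in exactly one $C_i^\ast$—or is one of the two halves of a unique merged edge appearing in exactly one $C_i^\ast$. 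This yields the asserted decomposition of $(V,\Gamma)$ into edge-disjoint feasible cycles.

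I do not expect a real obstacle: the one substantive input, that $(V^\ast,\Gamma^\ast)$ is component-wise Eulerian, is already in hand, and what remains is careful unfolding and bookkeeping. The only delicate point is a degenerate configuration in which a single cycle $C_i^\ast$ runs through both $\ell_u$ and $r_u$ for the same $u\in V_1$, so that the unfolded $C_i$ meets $u$ twice; here one reads ``feasible cycle'' as a closed trail respecting the prescribed pairing at every $V_1$-vertex it passes through—which $C_i$ does, since it traverses the two strands $\ell_{u,\beta}\ell_{u,\alpha}$ and $r_{u,\beta}r_{u,\alpha}$ each in one uninterrupted block—rather than insisting on a simple cycle.
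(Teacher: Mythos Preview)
Your proof is correct and follows exactly the paper's route: decompose $(V^\ast,\Gamma^\ast)$ into edge-disjoint directed cycles via Theorem~\ref{thm:tour-directed} and unfold each back to $(V,\Gamma)$. Your discussion of the degenerate case where an unfolded cycle revisits some $u\in V_1$ is more careful than the paper's own one-line treatment and correctly identifies that ``feasible cycle'' should be read as a pairing-respecting closed trail, which is all that Lemma~\ref{thm_compatible} and Theorem~\ref{thm_last} actually need.
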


 \subsection{Structure of feasible cycles} For a sequence of edges $\gamma_1,\dots, \gamma_m$ such that
  the terminus of $\gamma_k$ coincides with the origin of $\gamma_{k-1}$ for all
 $k=1,\dots, m-1$, we use
 $\gamma_1+\cdots+\gamma_m$ to denote the trail formed by these edges.

Let $\mathcal{C}$  be a feasible  cycle in the graph $(V,\Gamma)$ and let us write it as
\begin{equation*}
\mathcal{C}= \gamma_1+\cdots+\gamma_m.
\end{equation*}
 Hereafter, we always use
\begin{equation*}
 K_i=f_i(P)
\end{equation*}
 to denote the tile containing  $\gamma_i$, where $f_i\in \{\phi_1,\dots, \phi_N\}$.
We   denote two  vectors by
\begin{equation*}
\vec{\gamma}_i=\overrightarrow{[f_i(\mathbf{a}), f_i(\mathbf{b})]}  \ \ \text{ and } \ \ \vec{\rho}_i=\overrightarrow{[f_i(\mathbf{d}),f_i(\mathbf{a})]}.
\end{equation*}

 We say $\phi_i(P)$ is \emph{positively oriented} if its vertices
  $\phi_i(\mathbf{a}), \phi_i(\mathbf{b}), \phi_i(\mathbf{c})$ and $\phi_i(\mathbf{d})$ form a clockwise sequence on the boundary of $\phi_i(P)$; otherwise we say $\phi_i(P)$ is
  \emph{negatively oriented}.

 For two edges $\gamma$ and $\gamma'$ in $\Gamma$, we write
\begin{equation*}
   {\gamma}\sim  {\gamma'}
\end{equation*}
   if $\vec{\gamma}$ is either parallel or perpendicular  to $\vec{\gamma'}$.
   Indeed, $\sim$ is an equivalence relation on $\Gamma$.
The following observation is crucial in our discussion.

\begin{lemma}
\label{thm_compatible}
Let $\mathcal{C}=\gamma_1+\cdots+\gamma_m$ be a feasible cycle in $\Gamma$.
 Then for all $i=1,\dots, m$, by identifying  $K_{m+1}$ with $K_1$, we have that

(i) If $K_i$ and $K_{i+1}$ have different orientations, then $\gamma_i\sim \rho_{i+1}$
and   $\rho_i\sim \gamma_{i+1}$.

(ii) If $K_i$ and $K_{i+1}$ have the same orientation, then $\gamma_i\sim \gamma_{i+1}$, and
$\rho_i \sim \rho_{i+1}$.
\end{lemma}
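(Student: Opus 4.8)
The plan is to observe that the assertion is local at the single vertex $u$ shared by the consecutive edges $\gamma_i$ and $\gamma_{i+1}$. By the trail convention, $u$ is the origin of $\gamma_i$ and the terminus of $\gamma_{i+1}$, so $u=f_i(\ba)=f_{i+1}(\bb)$; hence at $u$ the tile $K_i$ contributes its $\alpha$-angle (the $\ba$-corner) and $K_{i+1}$ contributes its $\beta$-angle (the $\bb$-corner). The first thing I would record are the directions of the two edges of each tile emanating from $u$: for $K_i$ they are the hypotenuse, along $\vec{\gamma}_i$, and the side $f_i([\ba,\bd])$, along $-\vec{\rho}_i$, enclosing the angle $\alpha$; for $K_{i+1}$ they are the hypotenuse, along $-\vec{\gamma}_{i+1}$, and the side $f_{i+1}([\bb,\bc])$, which—because in the trapezoid $P$ the sides $[\bb,\bc]$ and $[\bd,\ba]$ are the two parallel sides—points along $-\vec{\rho}_{i+1}$, enclosing the angle $\beta$. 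Working in $\R/(\tfrac{\pi}{2}\Z)$, where $\sim$ is exactly equality of directions, the two edge-directions of $K_j$ at $u$ therefore have classes $\{[\vec{\gamma}_j],[\vec{\rho}_j]\}$; and since the $\ba$-angle of $P$ is $\alpha$, within each such pair the two classes differ by $\alpha$, the sign of the rotation from $[\vec{\gamma}_j]$ to $[\vec{\rho}_j]$ being fixed by the orientation of $K_j$.

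\textbf{The classes agree.}
The key step is to show $\{[\vec{\gamma}_i],[\vec{\rho}_i]\}=\{[\vec{\gamma}_{i+1}],[\vec{\rho}_{i+1}]\}$. Going once around $u$, successive edge-directions differ by the intervening tile angle, which by Lemma~\ref{lem:pattern} is $\alpha$, $\beta$ or $\pi/2$, contributing respectively $+\alpha$, $-\alpha$ (as $\beta\equiv-\alpha$) or $0$ in $\R/(\tfrac{\pi}{2}\Z)$. I would then go through the four patterns available at a vertex $u\in V\setminus V_1$, namely $(\alpha,\beta)$, $(\alpha,\beta,\alpha,\beta)$, $(\alpha,\beta,\pi/2,\pi/2)$ and $(\alpha,\pi/2,\beta,\pi/2)$: in each of them only two classes, $c$ and $c+\alpha$, occur among all the edges at $u$, and every tile-corner at $u$ (of angle $\alpha$, $\beta$ or $\pi/2$) carries one edge in each class, so both pairs above equal $\{c,c+\alpha\}$. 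When instead $u\in V_1$, feasibility forces $\gamma_i$ and $\gamma_{i+1}$ to be the two halves of one merged edge $\ell_u$ or $r_u$, so $\{K_i,K_{i+1}\}$ is $\{L_{u,\alpha},L_{u,\beta}\}$ or $\{R_{u,\alpha},R_{u,\beta}\}$; now the pattern $(\alpha,\alpha,\beta,\beta)$ produces three classes $c,c+\alpha,c+2\alpha$, but in either admissible pair the two tiles occupy the same adjacent pair of classes, so again the two pairs coincide.

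\textbf{Matching with orientation.}
Granting the set equality, exactly one of the following holds: $[\vec{\gamma}_i]=[\vec{\gamma}_{i+1}]$ and $[\vec{\rho}_i]=[\vec{\rho}_{i+1}]$ (that is, $\gamma_i\sim\gamma_{i+1}$ and $\rho_i\sim\rho_{i+1}$), or $[\vec{\gamma}_i]=[\vec{\rho}_{i+1}]$ and $[\vec{\rho}_i]=[\vec{\gamma}_{i+1}]$ (that is, $\gamma_i\sim\rho_{i+1}$ and $\rho_i\sim\gamma_{i+1}$). It remains to check that the first alternative is precisely the same-orientation case. Here I would return to the explicit configuration of rays around $u$ for each pattern: the orientation of $K_j$ tells us which of its two edges at $u$ is the hypotenuse (the one from which the other is obtained by an $\alpha$-rotation of the appropriate sign), so once the orientations of $K_i$ and $K_{i+1}$ are prescribed, the directions of $\vec{\gamma}_i$ and $\vec{\gamma}_{i+1}$ are pinned down relative to a common reference direction at $u$, and one reads off directly whether $[\vec{\gamma}_i]=[\vec{\gamma}_{i+1}]$; the $\pi/2$-corners occurring in $(\alpha,\beta,\pi/2,\pi/2)$ and $(\alpha,\pi/2,\beta,\pi/2)$ are exactly what turns the relevant coincidence into a perpendicularity rather than a parallelism. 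Carrying this out over the few patterns (at most six, up to reflection) finishes the proof.

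\textbf{Expected obstacle.}
I expect the last bookkeeping—matching the two alternatives with the orientations, pattern by pattern, while keeping straight which edge at each corner is the hypotenuse—to be the only genuinely laborious part. Two points require care: at a vertex $u\in V\setminus V_1$ a feasible cycle may pair the incoming ($\beta$-)edges with the outgoing ($\alpha$-)edges by either matching, so the conclusion must hold for every admissible pair (it does, since the set-equality step is matching-independent); and at $u\in V_1$ one must invoke that the only halves fused in $\Gamma^\ast$ are $L$ with $L$ and $R$ with $R$, the cross pairs $L$ with $R$ being excluded.
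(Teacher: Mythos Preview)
Your proposal is correct and follows essentially the same approach as the paper: both reduce to a local analysis at the shared vertex, invoke Lemma~\ref{lem:pattern} to restrict the angle patterns there, and then finish by a finite case check of the possible placements of $K_i$ and $K_{i+1}$. Your reformulation in $\R/(\tfrac{\pi}{2})\Z$ is a nice piece of bookkeeping that unifies the set-equality step, whereas the paper simply notes that in four of the five patterns the $\alpha$- and $\beta$-corners of $K_i$ and $K_{i+1}$ together span a straight angle (and in the fifth are separated by two right angles), and then reads both conclusions off eight explicit figures. One small slip: your parenthetical claim that a $\pi/2$-corner ``carries one edge in each class'' is false (its two edges differ by $\pi/2$, hence lie in the \emph{same} class), but you never use this---only the $\alpha$- and $\beta$-corners of $K_i$ and $K_{i+1}$ matter, and for those the claim is correct.
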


\begin{proof}  Let $v$ be the terminus of $\gamma_i$ as well as  the origin of $\gamma_{i+1}$.
 By Lemma \ref{lem:pattern}, the angle pattern at $v$ must be one of
 \begin{equation}\label{pattern-one}
 (\alpha,\beta), \
 (\alpha,\beta,\alpha,\beta), \  (\alpha,\beta,\pi/2,\pi/2), \
 (\alpha,\alpha,\beta,\beta), \   (\alpha,\pi/2,\beta,\pi/2),
 \end{equation}
 up to a rotation or a reversion.
In the first three cases, the angle of $K_i$ at $v$ and the angle of $K_{i+1}$ at $v$  form an angle measuring $\pi$,
see   Figure \ref{thm_proof_1};
in the fourth case, this is also true since $\mathcal C$ is feasible.
In the final case, $K_i$ and $K_{i+1}$ are separated by two right angles, see Figure \ref{thm_proof_2}.

In Figures \ref{thm_proof_1} and \ref{thm_proof_2}, we illustrate
 all the  possible ways to place  $K_{i}$ and $K_{i+1}$, and there are $8$ of them.
  From the figures, one easily sees that the lemma holds.
\end{proof}

\begin{figure}[h]
\centering
 \subfigure[]
 {\includegraphics[width=0.22\textwidth]{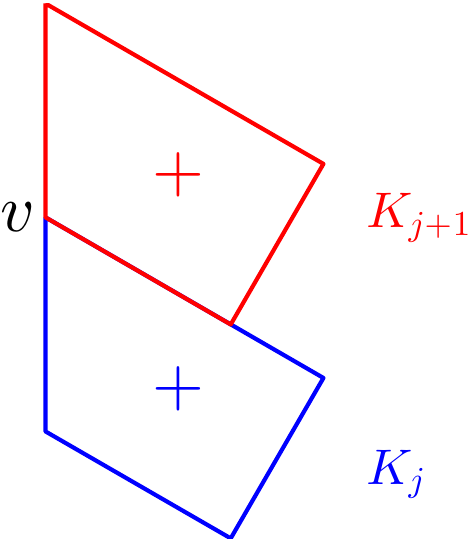}}\hspace{2cm}
\subfigure[]
{\includegraphics[width=0.22\textwidth]{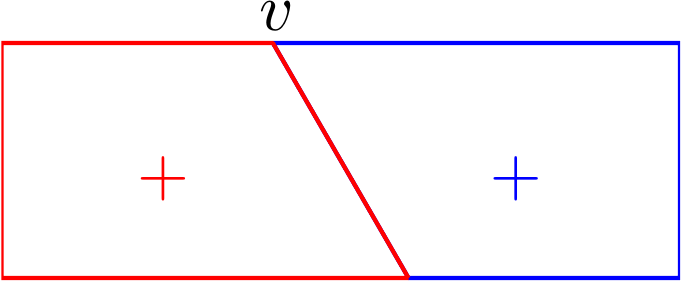}}\\
\subfigure[]
{\includegraphics[width=0.20\textwidth]{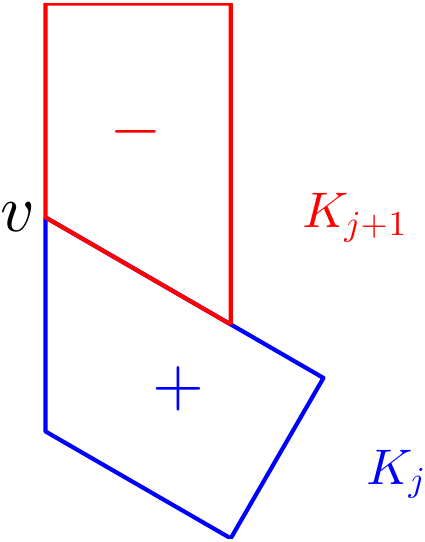}}\hspace{2cm}
\subfigure[]
{\includegraphics[width=0.22\textwidth]{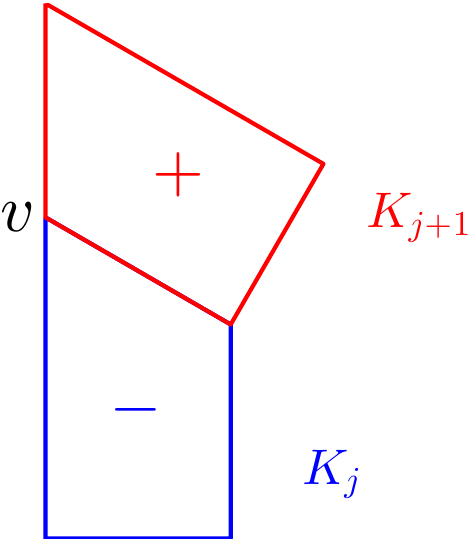}}
\\
\caption{Lemma \ref{thm_compatible}: The four cases that $K_j \cap K_{j+1}$ is a line segment. Here `$+$' means
the tile is positively oriented and `$-$' means the orientation is negative. The corresponding angle pattern is one of the first four cases of \eqref{pattern-one}. }
\label{thm_proof_1}
\end{figure}

\begin{figure}[htbp]
\centering
\subfigure[]
 {\includegraphics[width=0.22\textwidth]{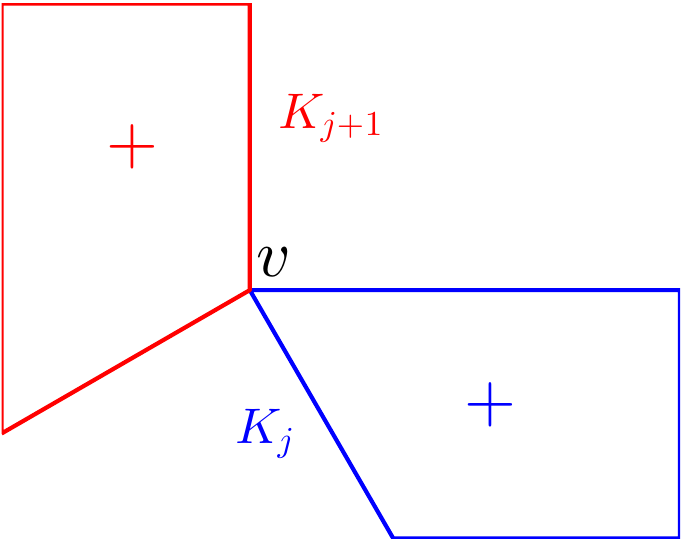}}\hspace{2cm}
\subfigure[]
{\includegraphics[width=0.25\textwidth]{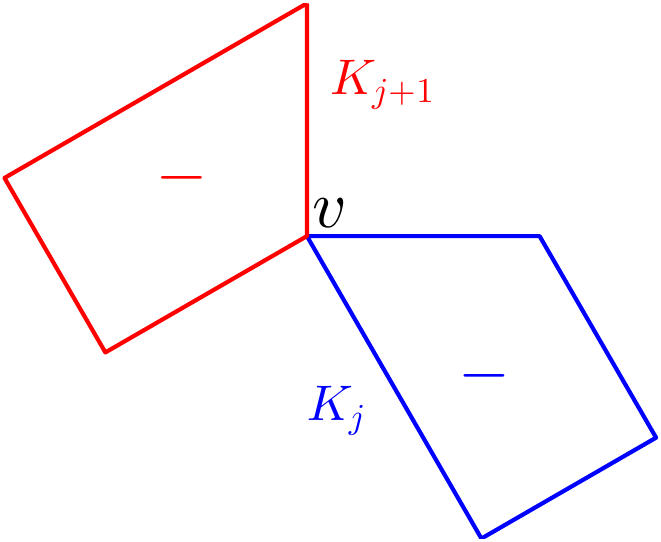}}\\
\subfigure[]
{\includegraphics[width=0.25\textwidth]{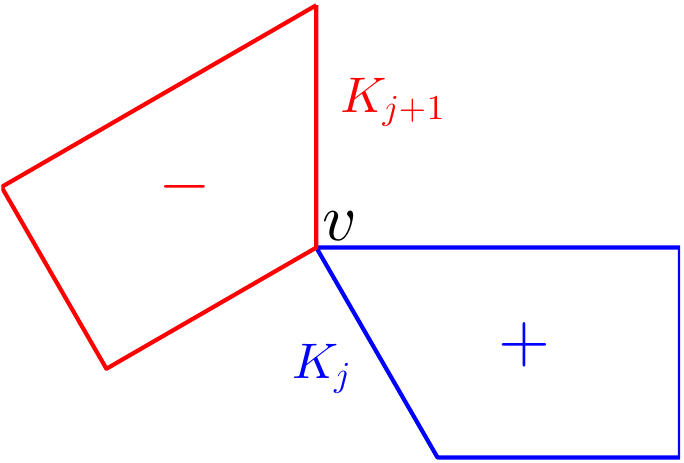}}\hspace{2cm}
\subfigure[]
{\includegraphics[width=0.22\textwidth]{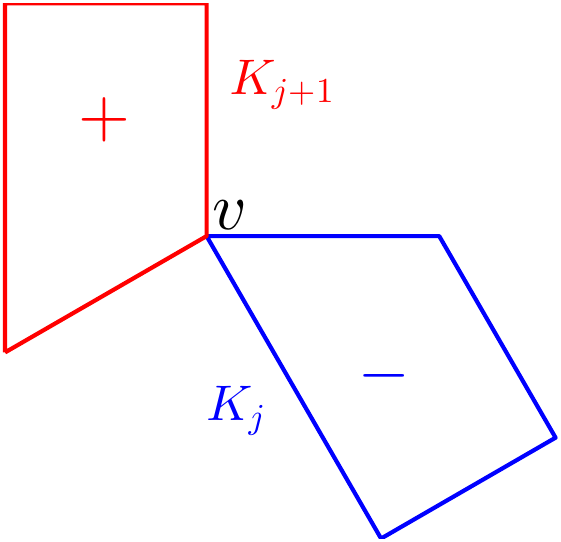}}
\\
\caption{Lemma \ref{thm_compatible}: The four cases that $ K_j \cap K_{j+1}$ is a single  point.
The corresponding angle pattern is the last case of \eqref{pattern-one}.}
\label{thm_proof_2}
\end{figure}

\begin{figure}[htbp]
\centering
\subfigure[Negative orientation]
{\includegraphics[width=0.35\textwidth]{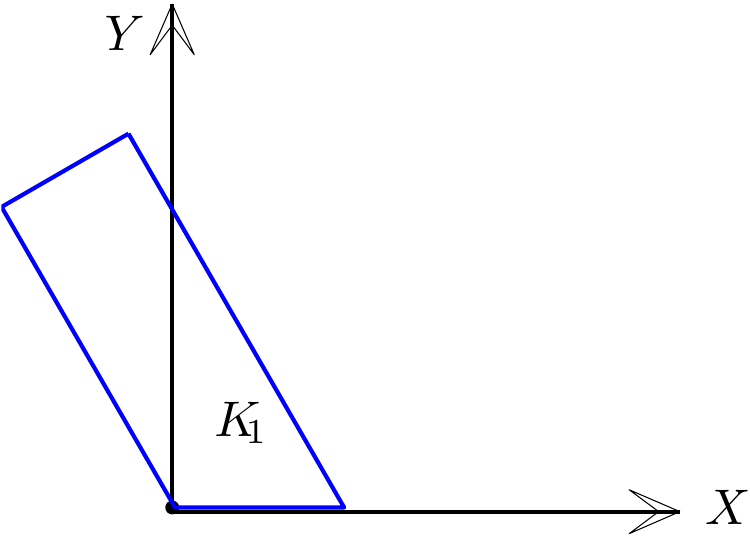}}\hspace{1.5cm}
\subfigure[Positive orientation]
{\includegraphics[width=0.3\textwidth]{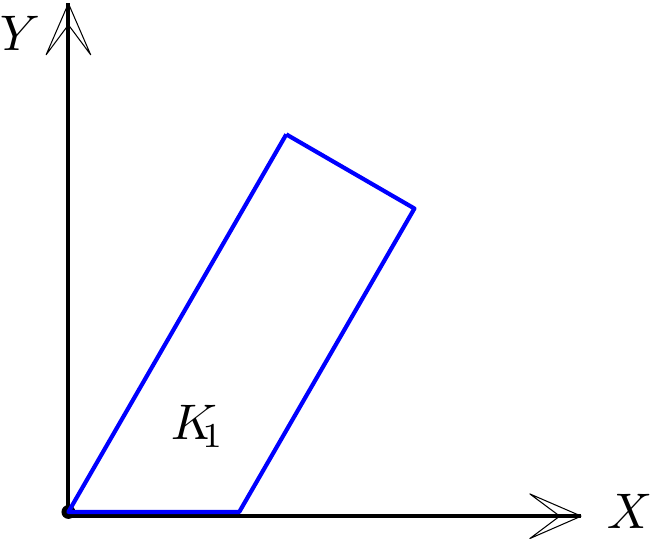}}
\caption{Setting the coordinate system.}
\label{fig_setcoordinate}
\end{figure}

 Let $E$ and $F$ be two points in $\mathbb{R}^2$. We will identify the vector $\overrightarrow{EF}$
to a complex number. We use $\arg z$ to denote the principle argument of a complex number $z$.

\begin{theorem} If $\mathcal{C}=\gamma_1+\cdots+\gamma_m$ is a feasible cycle in $\Gamma$, then $m$ is even.
\label{thm_last}
\end{theorem}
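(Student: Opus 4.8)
\textbf{Proof strategy for Theorem \ref{thm_last}.}
The plan is to track an invariant as we traverse the feasible cycle $\mathcal{C}=\gamma_1+\cdots+\gamma_m$ edge by edge, and to show that this invariant must return to its starting value after $m$ steps only if $m$ is even. The natural invariant is built from the direction of $\vec{\gamma}_i$ together with the orientation (positive or negative) of the tile $K_i$. By Lemma \ref{thm_compatible}, consecutive edges satisfy $\gamma_i\sim\gamma_{i+1}$ or $\gamma_i\sim\rho_{i+1}$ according to whether $K_i,K_{i+1}$ have the same or opposite orientation, so the vector $\vec{\gamma}_i$ is constrained to a finite set of directions once we fix a coordinate system. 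First I would set up coordinates as in Figure \ref{fig_setcoordinate}: place $P$ so that its hypotenuse direction and the direction of $\vec{\rho}$ are convenient (e.g.\ $\vec{\gamma}$ along a fixed axis), and record, for each tile $K_i$, the complex numbers $\vec{\gamma}_i$ and $\vec{\rho}_i$. Because all tiles are congruent to $P$ and the relation $\sim$ only allows parallel/perpendicular steps, the argument $\arg\vec{\gamma}_i$ takes values in a set of the form $\{\theta_0+k\pi/2\}$, i.e.\ essentially a $\mathbb{Z}/4$-valued quantity, and similarly for $\vec{\rho}_i$; moreover, for a fixed orientation, $\arg\vec{\gamma}_i-\arg\vec{\rho}_i$ is a fixed value determined by the shape of $P$ (namely $\pm\alpha$ or its complement), while switching orientation flips the sign of this offset.

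The key computation is then to see how the pair (direction class of $\vec{\gamma}_i$, orientation of $K_i$) transforms into (direction class of $\vec{\gamma}_{i+1}$, orientation of $K_{i+1}$) in each of the eight configurations enumerated in the proof of Lemma \ref{thm_compatible} (Figures \ref{thm_proof_1} and \ref{thm_proof_2}). I expect that in every one of these eight local moves, the parity of some $\mathbb{Z}/2$-valued quantity extracted from this data flips. A clean way to package this: assign to each edge $\gamma_i$ a sign $\varepsilon_i=\pm1$ depending on whether $K_i$ is positively or negatively oriented, and assign to each edge a ``phase'' $\psi_i\in\mathbb{Z}/4$ recording $\arg\vec{\gamma}_i$ relative to a reference; then check via the eight cases that $\psi_{i+1}-\psi_i$ and $\varepsilon_{i+1}\varepsilon_i$ are correlated in such a way that $\sum_i$ (appropriate contribution) $\equiv m \pmod 2$ but also $\equiv 0$ because $\mathcal{C}$ is a cycle (the phase must return to its starting value, the orientation must return to its starting value). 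Balancing these two evaluations of the same sum forces $m$ even. The cleanest incarnation is probably: show that each step multiplies a suitably defined unit complex number (something like $i^{\psi}$ times an orientation-dependent factor) by $\pm i$, so that after $m$ steps the total is multiplication by $(\pm i)^{\text{stuff}}$, and closing up the cycle forces this to be $1$, which is impossible unless $m$ is even.

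The main obstacle I anticipate is bookkeeping the interaction between the direction of the hypotenuse and the direction of $\vec{\rho}$ when the orientation of the tile flips: in the opposite-orientation case Lemma \ref{thm_compatible} only tells us $\gamma_i\sim\rho_{i+1}$ and $\rho_i\sim\gamma_{i+1}$, so the hypotenuse direction jumps to (a perpendicular/parallel copy of) the $\vec\rho$-direction of the new tile, and one has to carry both vectors $\vec\gamma_i,\vec\rho_i$ through the recursion rather than $\vec\gamma_i$ alone. Handling this requires fixing, once and for all, the finite list of possible $(\vec\gamma_i,\vec\rho_i)$ pairs (there are finitely many up to the symmetries of the square lattice of directions generated by $\alpha$ and $\pi/2$), and then verifying the parity claim by going through the eight pictures. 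One subtlety to watch: when $\alpha$ is a ``resonant'' angle such as $\pi/3$ or $\pi/4$, the direction set $\{\theta_0+k\pi/2\}$ for $\vec\gamma$ and the corresponding set for $\vec\rho$ may partially coincide, so the phase $\psi_i$ might be ill-defined as a single $\mathbb{Z}/4$ value; I would either argue that the parity argument is insensitive to this coincidence, or treat such $\alpha$ by noting that (as the introduction indicates) the $\alpha=\pi/3$ case is deferred to later sections and here we may assume genericity of $\alpha$. With that caveat, the parity invariant argument should go through and yield that $m$ is even.
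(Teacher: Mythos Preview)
Your proposal has a genuine gap: you never identify the $\mathbb{Z}/2$-valued invariant that is supposed to flip at every step, and in fact no such purely local invariant exists. By Lemma~\ref{thm_compatible}, when $K_i$ and $K_{i+1}$ have the \emph{same} orientation we only get $\gamma_i\sim\gamma_{i+1}$, which allows $\vec\gamma_{i+1}$ to be parallel to $\vec\gamma_i$ (rotation by $0$ or $\pi$), not just perpendicular. So the step need not ``multiply by $\pm i$'', and the orientation need not flip. Tracking the pair (direction class of $\vec\gamma_i$ mod $\pi/2$, orientation of $K_i$) collapses to tracking orientation alone, and orientation can stay constant for arbitrarily many consecutive edges. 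Your eight-case check would therefore not produce a quantity that changes parity at every transition.

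The missing idea is global, not local: use that $\mathcal C$ is closed to get $\sum_{i=1}^m\vec\gamma_i=0$ in $\mathbb C$. Once one sets up coordinates and proves (via Lemma~\ref{thm_compatible}, essentially the inductive statement you wrote about $\arg\vec\gamma_i$ and $\arg\vec\rho_i$) that every $\vec\gamma_i$ lies in $\{\pm1,\pm i\}\cup\omega\{\pm1,\pm i\}$ with $\omega=e^{i\alpha}$, the closure equation becomes $(a+bi)+(c+di)\omega=0$ for integers $a,b,c,d$ counting net contributions in each of the eight directions. Taking moduli gives $a^2+b^2=c^2+d^2$, hence $a+b+c+d$ is even; since $a+b+c+d\equiv m\pmod 2$, one concludes $m$ is even. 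This closure-plus-norm argument is what replaces your hoped-for local flip.

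One further correction: your remark that ``the $\alpha=\pi/3$ case is deferred to later sections and here we may assume genericity of $\alpha$'' is a misreading. Theorem~\ref{thm_last} is stated and must be proved for every $\alpha\in(0,\pi/2)$; the later special treatment of $\alpha=\pi/3$ concerns whether the hypotenuse graph is component-wise Eulerian, not the parity of feasible cycles. The argument above works uniformly because $0<\alpha<\pi/2$ ensures $\omega\notin\{\pm1,\pm i\}$, so the two direction classes are genuinely distinct.
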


\begin{proof} To facilitate the discussion,
 we set a coordinate system as follows:
If  all   $K_j$ are  negatively oriented, then  we set the coordinate system
 as in Figure \ref{fig_setcoordinate}(a);
  otherwise, we assume $K_1$ has positive orientation without loss of generality, and   set the coordinate system  as in   Figure \ref{fig_setcoordinate}(b).
  In the following we use $A\oplus B$ instead of $A+B$ to emphasize that if $a,a'\in A$ and $b,b'\in B$,
  then $a+b=a'+b'$ holds only when $a=a'$ and $b=b'$.
We claim that
\begin{equation}\label{eq:argument}
\arg \vec\gamma_i \in \{ 0, \pi/2, \pi, 3\pi/2 \}  \oplus \{0, \beta\}.
\end{equation}

If the orientations of $K_1,\cdots,K_m$ are the same,
by Lemma \ref{thm_compatible}, we have $\gamma_j \sim \gamma_1$  for all $j=1,2,\dots, m$. Since $\arg \vec \gamma_1=0$ or $\pi$, we have
$\arg \vec\gamma_i \in \{0, \pi/2, \pi, 3\pi/2 \}.$

If the orientations of $K_1,\cdots,K_m$ are not the same, we claim that

(i) If $K_i$ has negative orientation, then $\arg \vec\gamma_i \equiv \alpha \pmod {\pi/2},$ $\arg \vec\rho_i \equiv 0 \pmod {\pi/2}$;

(ii) If $K_i$ has positive orientation, then $\arg \vec\gamma_i \equiv 0 \pmod {\pi/2},$ $ \arg \vec\rho_i \equiv \alpha \pmod {\pi/2}$.

For $i=1$,  $K_1$ has positive orientation by our convention, and  $\arg \vec \gamma_1 =\pi$ and $\arg \vec\rho_1 =\alpha+\pi$ by our choice of the coordinate system, so the scenario of item (ii) occurs.
Now the claim can be easily proved by Lemma \ref{thm_compatible}. Our claim is proved.

Let $\omega=e^{i\alpha}$. Recall that $|\gamma|$ denotes the length of $\gamma$.
 By applying a dilation to the tiling, we may assume $|\gamma_i|=1$, then
by the above claim, we have
\begin{equation*}
  \vec \gamma_i \in \{1, -1, i, -i, \omega, -\omega, i\omega, -i\omega \}.
\end{equation*}
  Set
\begin{equation*}
  a=| \{  i;~     \vec \gamma_i=1 \}| - | \{i;   \vec \gamma_i=-1 \}|, \quad
  b=| \{  i;~   \vec \gamma_i=i \}| - | \{ i ;~ \vec \gamma_i=-i \}|,
\end{equation*}
\begin{equation*}
  c=| \{  i;~  \vec \gamma_i=\omega \}| - | \{ i;~  \vec \gamma_i=-\omega \}|, \quad
  d=| \{  i;~ \vec \gamma_i=i\omega \}| - | \{ i ;~ \vec  \gamma_i=-i\omega \}|.
\end{equation*}
$\mathcal{C}$ is  closed implies that $a+bi+c\omega+di\omega=0$, so either $a=b=c=d=0$, or
\begin{equation*}
\left |\frac{a+bi}{c+di}\right |=|-\omega|=1.
\end{equation*}
It follows that $a^2+b^2=c^2+d^2$. Then  $(a+b+c+d)^2$ is even, from which it follows that so is $a+b+c+d$.  Therefore,  the number of edges in ${\mathcal C}$
is even.
\end{proof}

Now, Theorem \ref{thm:euler} is an immediate consequence of Lemma \ref{lem:disjoint}  and Theorem \ref{thm_last}.

\section{\textbf{The proof of Theorem \ref{thm:trapezoid} when $\alpha \neq \pi/3$}}
Let $\Omega=\bigcup_{j=1}^N{P_j}$ be a tiling, where each $P_j$ is congruent with a right angle trapezoid $P$ with an angle $0<\alpha<\pi/2$.
Recall that $(V, \Gamma)$ is the hypotenuse graph of the tiling \eqref{eq:tiling2}.

\subsection{The case $\alpha\not\in \{\pi/4, \pi/3\}$}
Let $u \in V$ and let
    $(\beta_1,\cdots,\beta_k)$ be the angle pattern at $u$.
      Then
   $\beta_1 + \cdots + \beta_k$$=\pi/2, \pi \text{ or }2\pi,$
    and we call $\beta_1 + \cdots + \beta_k$   a
   \emph{$V$-decomposition}  at $u$.
Since $u$ is taken from $V$, at least one angle around $u$ is $\alpha$ or $\beta$.

\medskip

\noindent \textbf{Proof of Theorem \ref{thm:trapezoid}  when $\alpha\not\in \{\pi/3,\pi/4\}$.}
  Suppose the hypotenuse graph $(V, \Gamma)$ is not component-wise Eulerian. Then there exists $u\in V$ such that
$ \deg^+(u) < \deg^-(u)$. Suppose the $V$-decomposition at $u$ is
\begin{equation*}
a\alpha+b\beta+c\pi/2, \quad \text{where $0\leq a<b$ and $c\geq 0$}.
\end{equation*}
From
\begin{equation*}
2\pi\geq a\alpha+b\beta+c\pi/2>a(\alpha+\beta)=a\pi,
\end{equation*}
we conclude that $a<2$.

If $a=1$, then we have $(b-1)\beta+c\pi/2=0$ or $\pi$, which is impossible.

If $a=0$, then $b\beta+c\pi/2=\pi$ or $2\pi$, which implies the $V$-decomposition at $u$ is
either $3\beta=2\pi$ or  $2\beta+\pi/2=2\pi$. In the former case $\alpha=\pi/3$ and in the latter case $\alpha=\pi/4$.

 So     $(V, \Gamma)$  must be component-wise Eulerian,
 and $N$ is even by Theorem \ref{thm:euler}.
$\Box$

\subsection{The case $\alpha=\pi/4$}
%{\textbf{Proof of Theorem \ref{thm:trapezoid} when $\alpha = \pi/4$}}
%Let $\Omega=\bigcup_{j=1}^N{P_j}$ be a tiling of a square $\Omega$, where each $P_j$ is congruent with a right angle trapezoid $P$ with an angle $\alpha=\pi/4$.
In this case, instead of using the  hypotenuse  graph  $(V, \Gamma)$, we will use an undirected graph.
Let $(V, \Gamma_0)$ be an undirected graph, which is obtained by regarding every edge $\gamma\in \Gamma$
as an undirected edge.
Clearly  for every $u \in V$, the degree of  $u$ is even. Consequently, $\Gamma_0$
is component-wise Eulerian, and it is an edge-disjoint union of cycles.

\begin{thm}\label{thm:pi/4-2} Any  cycle of $(V, \Gamma_0)$ consists of an even number of edges.
Consequently, $N=|\Gamma_0|$ is an even number.
\end{thm}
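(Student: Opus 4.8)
The plan is to imitate the structure of the proof of Theorem \ref{thm_last}, but now working in the undirected graph $(V,\Gamma_0)$, where the orientation of hypotenuses is irrelevant. First I would set up a coordinate system adapted to a fixed cycle $\mathcal C=\gamma_1+\cdots+\gamma_m$ of $(V,\Gamma_0)$, exactly as in Figure \ref{fig_setcoordinate}: if all tiles $K_i$ carrying the edges $\gamma_i$ are negatively oriented we use one frame, otherwise we normalize so that $K_1$ is positively oriented and use the other. The key local input is a compatibility lemma of the same flavour as Lemma \ref{thm_compatible}: at the common vertex $v$ of $\gamma_i$ and $\gamma_{i+1}$, the angle pattern (by the $V$-decomposition analysis, with $\alpha=\pi/4$ so $\beta=3\pi/4$) must be one of a short explicit list, and in each case the tiles $K_i$, $K_{i+1}$ are either ``glued along a segment'' or ``separated by right angles.'' The new feature at $\alpha=\pi/4$ is the extra pattern $2\beta+\pi/2=2\pi$, i.e. a vertex where two $\beta$-angles and one right angle meet; I would check that even in this configuration the direction $\vec\gamma_{i+1}$ is obtained from $\vec\gamma_i$ (and $\vec\rho$'s correspondingly) by adding an element of $\{0,\pm\pi/2,\pi\}$ together with a possible shift by $\beta=3\pi/4$, so that the conclusion $\arg\vec\gamma_i\in\{0,\pi/2,\pi,3\pi/2\}\oplus\{0,\beta\}$ analogous to \eqref{eq:argument} still holds around the whole cycle.

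Granting this, I would proceed verbatim as in Theorem \ref{thm_last}: after a dilation assume $|\gamma_i|=1$, set $\omega=e^{i\pi/4}$, and observe that every $\vec\gamma_i$ lies in $\{\pm1,\pm i,\pm\omega,\pm i\omega\}$. Defining the four integer counts $a,b,c,d$ as there, the closedness of the cycle (as an undirected closed trail, reading the edges in cyclic order with consistent orientations inherited from the walk) gives $a+bi+c\omega+di\omega=0$, hence $|a+bi|=|c+di|$ and $a^2+b^2=c^2+d^2$; then $(a+b+c+d)^2$ is even, so $a+b+c+d$ is even, and this sum is exactly the number $m$ of edges in $\mathcal C$ up to the bookkeeping of signs — more precisely the number of edges equals $|a|+|b|+|c|+|d|$ counted with the orientation choices, and one checks parity is preserved. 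Since $(V,\Gamma_0)$ is an edge-disjoint union of such cycles and each contributes an even number of edges, $N=|\Gamma_0|=\sum m$ is even.

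The main obstacle I anticipate is the local compatibility step: with $\alpha=\pi/4$ there are genuinely more admissible angle patterns at a vertex than in the component-wise Eulerian setting of Section 4 (the directedness was doing real work there, forcing the pairing $(\alpha,\alpha,\beta,\beta)$), so I must enumerate all $V$-decompositions at $u\in V$ for $\alpha=\pi/4$ — namely $\alpha+\beta$, $\alpha+\beta+\alpha+\beta$, $\alpha+\alpha+\beta+\beta$, $\alpha+\beta+\pi/2+\pi/2$, $\alpha+\pi/2+\beta+\pi/2$, $2\beta+\pi/2$, and possibly patterns containing $\alpha$ but summing to $\pi$ or $\pi/2$ — and verify in each that consecutive hypotenuse directions differ by the allowed set. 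A secondary subtlety is that, because we are now in an undirected cycle, I should be careful that ``$\mathcal C$ is closed'' is used correctly: traversing the cycle fixes a consistent orientation of each $\gamma_i$, and it is with respect to that traversal orientation that the vector sum vanishes. Once the direction dichotomy is established the rest is the same $a^2+b^2=c^2+d^2$ parity trick already used above, so I do not expect further difficulty there.
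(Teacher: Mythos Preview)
Your plan works in spirit, but it is both harder than necessary and has a gap in the step you flag as the ``main obstacle.'' Because $\alpha=\pi/4$ forces $\pi/2=2\alpha$, the list of admissible $V$-decompositions is much longer than the one you wrote: besides $2\beta+\pi/2$, one must also allow $2\alpha$ at a corner, $4\alpha$ and $2\alpha+\pi/2$ on the boundary, and at interior vertices $8\alpha$, $6\alpha+\pi/2$, $4\alpha+2\cdot\pi/2$, $2\alpha+3\cdot\pi/2$, $\beta+5\alpha$, $\beta+3\alpha+\pi/2$, $\beta+\alpha+2\cdot\pi/2$, together with all their cyclic rearrangements. Verifying a Lemma~\ref{thm_compatible}-style dichotomy for each of these is doable (and in every case the change of direction is indeed a multiple of $\pi/4$), but you have not carried it out, and it is considerably more tedious than you suggest.

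The paper sidesteps this entirely. Since for $\alpha=\pi/4$ \emph{all four} interior angles of $P$ are multiples of $\pi/4$, every side of every tile $P_j$ lies at an angle in $\{k\pi/4:0\le k\le7\}$ once a single tile touching $\partial\Omega$ fixes the frame; hence in particular $\arg\vec\gamma_i\in\{k\pi/4:0\le k\le7\}$ for every hypotenuse, with no need to examine angle patterns at all. After that, the parity argument is exactly the one from Theorem~\ref{thm_last}: with $\omega=e^{i\pi/4}$ the eight possible directions are $\{\pm1,\pm i,\pm\omega,\pm i\omega\}$, closedness gives $a+bi+(c+di)\omega=0$, hence $a^2+b^2=c^2+d^2$, so $a+b+c+d$ is even, and since $a\equiv n_1+n_{-1}\pmod 2$ (and similarly for the others), $m\equiv a+b+c+d\pmod 2$. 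Your remark that ``the number of edges equals $|a|+|b|+|c|+|d|$'' is not quite right; the correct link is this congruence modulo $2$.
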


\begin{proof}
Let $\mathcal{C}=\gamma_1+\cdots+\gamma_m$ be a   cycle in $\Gamma$. We choose a direction of the cycle, and regard all the edges involved as a directed edge, and then as a vector, and also as a complex number.
 Clearly,
\begin{equation*}
\arg \vec \gamma_i \in \left \{ \frac{k\pi}{4};~  0\leq k \leq 7 \right \}.
\end{equation*}
Therefore, one can show that $m$ is even by a direct calculation, or  by the same argument as in Theorem \ref{thm_last}.
\end{proof}

Consequently, Theorem  \ref{thm:trapezoid} hols when  $\alpha = \pi/4$.

\section{\textbf{The proof of Theorem \ref{thm:trapezoid} when  $\alpha = \pi/3$}}
 Let $\Omega$ be a square, $P$ be a right angle trapezoid  with an angle $\alpha=\pi/3$. Let
 \begin{equation}\label{tiling-3}
 \Omega=\bigcup_{j=1}^N{P_j}
  \end{equation}
  be a tiling of  $\Omega$, where each $P_j$ is congruent with  $P$.
  From now on, we assume that $N$ is an odd number, and we are going to deduce a contradiction.
  For a polygon $P$, we shall use $\partial P$ to denote its boundary.

 Let $\mathcal V$ be the union of the vertex sets of all $P_j$, and $\Lambda$ be the set consisting of all sides of all $P_j$.
Recall that  $(V, \Gamma)$ is the hypotenuse graph of the  tiling  \eqref{tiling-3}.

\begin{definition}{\rm
 For  $u,v \in \mathcal V$,    we call the line segment $\overline{[u,v]}$  a \emph{basic segment}
    if for any $e\in \Lambda$, either $e\subset \overline{[u,v]}$, or
 $e\cap \overline{[u,v]}$ is either a point or empty.

If  $\overline{[u,v]}$  is a basic segment and it is not a proper subset of any other
 basic segment, then we call  $\overline{[u,v]}$  a \emph{maximal segment}.
 }
\end{definition}

For a basic segment $\overline{[u,v]}$, the line containing the segment divides the plane into two parts. If we assume $u$ as the origin and $v$ as the terminus, then we call the left hand side half plane   the \emph{upper part}, and the other half plane the \emph{lower part}.

Denote by $\partial P_j$ the boundary of $P_j$. Clearly $\bigcup_{j=1}^N \partial P_j$ is a non-overlapping  union of maximal   segments.

By applying a dilation, we may assume   the lengths of the four sides of the tile
$P$ to be
\begin{equation*}
x+1,2 ,x,  \sqrt{3}.
\end{equation*}
%where $x>0$.
%We use \emph{side $m$} in $P_j$ to indicate the  side with length $m$ in a tile $P_j$.

\begin{lemma}\label{lem:r-s} There exist  $r,s \in \mathbb{Q}$ with $s>0$ such that
 $x=r+s\sqrt{3}$.
\end{lemma}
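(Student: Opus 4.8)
The plan is to exploit the tiling relations coming from the maximal segments of the dissection, exactly as one does in the classical "area/length" arguments for tiling a square. After the dilation normalizing the side lengths of $P$ to $x+1,2,x,\sqrt3$, every maximal segment $M$ in $\bigcup_j\partial P_j$ has length equal to a sum of side-lengths of the tiles whose edges lie along $M$; thus $|M|\in\mathbb Z[x,\sqrt3]$, i.e. $|M|=p+qx+t\sqrt3$ for integers (in fact nonnegative integers) $p,q,t$. First I would record the two obvious global constraints: the four sides of $\Omega$ are themselves maximal segments (or unions of maximal segments) of a common length $L$, so $L\in\mathbb Z[x,\sqrt3]$; and horizontal/vertical "fault lines" or, more robustly, the decomposition of each side of $\Omega$ into tile-edges, give linear relations among $x$, $1$, $\sqrt3$ with integer coefficients.

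The key step is to produce a single nontrivial integer relation $A x + B + C\sqrt3 = 0$ with $(A,B)\neq(0,0)$. I would get this by comparing two maximal segments that are forced to have equal length: e.g. two opposite sides of the square $\Omega$, each expressed as an integer combination of the edge-lengths $x+1,2,x,\sqrt3$ of the tiles meeting that side. Since the side of the square containing vertex patterns of $\alpha=\pi/3$ and $\beta=2\pi/3$ tiles need not be decomposed the same way on opposite sides, subtracting the two expressions yields $ax+b+c\sqrt3=0$ with $a,b,c\in\mathbb Z$. If $a\neq 0$ we are done after solving: $x=-(b+c\sqrt3)/a=r+s\sqrt3$ with $r,s\in\mathbb Q$; one then checks $s>0$ because $x>0$ forces it once we know $x$ is irrational. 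If instead $a=0$ in every such relation, then I would argue separately: $b+c\sqrt3=0$ forces $b=c=0$ (as $\sqrt3\notin\mathbb Q$), so those relations are vacuous, and then I would look instead at the total-length/total-"width" bookkeeping — summing all horizontal tile-edges against the square's width and all vertical ones against its height — to extract a relation in which $x$ genuinely appears with nonzero coefficient; the trapezoid has a horizontal-type side of length $x$ and $x+1$, a vertical side $\sqrt3$, and a slanted side $2$, so $x$ cannot be eliminated from the full system unless the square's side itself is forced to be a rational combination of $1$ and $\sqrt3$ alone, in which case one of the area equations (total area $=N\cdot\text{area}(P)$, area$(P)=\frac{\sqrt3}{2}\cdot\frac{(x+1)+x}{2}$, a quadratic in $x$) pins $x$ down to the required algebraic form.

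Concretely, I expect the cleanest route is: (1) show $x$ is irrational — otherwise all lengths lie in $\mathbb Q+\mathbb Q\sqrt3$ trivially but then one must still locate $\sqrt3$; actually the statement allows $x\in\mathbb Q$ (take $s$... no, $s>0$ is required), so in fact one must rule out $x\in\mathbb Q$, which follows since a rational $x$ together with the $\sqrt3$ from the vertical side would make the square's side rational in $\sqrt3$ with the $\sqrt3$-part an integer multiple of $\sqrt3$ while its rational part involves $x$; matching the horizontal side (which can avoid $\sqrt3$) then gives a contradiction with $N$ odd via the area equation $N\cdot\frac{\sqrt3(2x+1)}{4}=L^2$, since the left side is irrational (multiple of $\sqrt3$) unless the geometry degenerates. (2) From irrationality of $x$ and of $\sqrt3$, and the fact that $\{1,x,\sqrt3\}$ cannot be $\mathbb Q$-independent — because the square closes up, giving an integer relation among the edge lengths — deduce $x\in\mathbb Q+\mathbb Q\sqrt3$, say $x=r+s\sqrt3$, and finally $s>0$ since $s=0$ would mean $x\in\mathbb Q$, already excluded, and $s<0$ would have to be checked against $x>0$ together with the explicit relation (the sign of $s$ is inherited from the geometry: the side $x$ and the side $\sqrt3$ on adjacent edges of the trapezoid force the coefficient to be positive).

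The main obstacle will be step (2): establishing that $\{1,x,\sqrt3\}$ is $\mathbb Q$-linearly dependent, i.e. that no tiling can have $x$ "transcendental over $\mathbb Q(\sqrt3)$". This is where one must genuinely use that $\Omega$ is a square (all four sides equal) rather than merely a rectangle, and possibly also that $N$ is odd; the trick is to set up the system of linear equations recording, for each maximal segment, that its length computed from above equals its length computed from below, treat $1,x,\sqrt3$ as formal unknowns, and show the solution space is at most $2$-dimensional so that $x$ satisfies a $\mathbb Q$-linear relation with $1$ and $\sqrt3$. I would not expect to need the full strength of the hypotenuse-graph machinery here — only elementary segment bookkeeping — but making the dependence argument airtight (handling the possibility that $x$ and $\sqrt3$ are rationally independent, which would contradict the square closing up) is the delicate part.
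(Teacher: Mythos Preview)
Your outline has the right ingredients --- linear relations from maximal segments plus an area equation --- but two steps are not actually carried out, and both are where the paper does something specific that your sketch does not supply.

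\textbf{The degenerate case.} You correctly anticipate that comparing the upper and lower tilings of each maximal segment gives relations $a x+b(x+1)+c\sqrt3+2d=0$ with $a,b,c,d\in\mathbb Z$, and that if some relation has $a+b\neq 0$ you can solve for $x$. But your handling of the case where \emph{every} such relation has $a+b=0$ is hand-waving (``total-length/total-width bookkeeping'', ``one of the area equations pins $x$ down''). The paper's move here is concrete and different from anything you wrote: if $a+b=0$ for every maximal segment, then $b+2d+c\sqrt3=0$ forces $c=0$ as well, so on each maximal segment the number of $\sqrt3$-sides from above equals the number from below. Since every tile contributes its unique $\sqrt3$-side to exactly one maximal segment, the tiles partition into classes of even size, contradicting $N$ odd. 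This parity argument is the actual content of the degenerate case, and it genuinely uses the standing hypothesis that $N$ is odd; your sketch never invokes it.

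\textbf{The sign of $s$.} Your attempts here do not work. Ruling out $x\in\mathbb Q$ by the contradictions you describe is circular or incomplete, and ``the sign of $s$ is inherited from the geometry'' is not an argument. The paper's proof is a one-line area computation: once $x=r+s\sqrt3$, the side of the square is $\ell=A+B\sqrt3$ with $A,B\in\mathbb Q$, and the area of the tile is $\tfrac{2r+1}{2}\sqrt3+3s$; equating rational parts of $\ell^2=N\cdot\mathrm{area}(P)$ gives $A^2+3B^2=3Ns$, whence $s>0$ since $\ell>0$. This is exactly where the hypothesis that $\Omega$ is a square (not merely a rectangle) enters. Incidentally, your area formula for $P$ is off by a factor of $2$: the height is $\sqrt3$, so $\mathrm{area}(P)=\tfrac12(2x+1)\sqrt3$.
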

\begin{proof}
Let $L_j, 1\leq j\leq 4$, be the four sides of $\Omega$. Clearly $L_j$ are maximal segments.
We identify $L_1$ and $L_3$, and identify $L_2$ with $L_4$, so that $L_1=L_3$
and $L_2=L_4$ have both upper part and lower part.
Let $M$ denote the collection of maximal   segments  of the tiling $\Omega=\bigcup_{j=1}^NP_j$ after this identification.

Let $\overline{[u,v]}$ be a maximal line segment. Let $L$ be the line containing $\overline{[u,v]}$.
 Since $\overline{[u,v]}$ is tiled by some sides of tiles  on the upper part of $L$, there exist $a_1,b_1,c_1,d_1 \in \mathbb{N}$ such that
\begin{equation*}
\left |\overline{[u,v]}\right |=a_1 x + b_1 (x+1)+ c_1  \sqrt{3}+2d_1.
\end{equation*}
A similar relation exists at  the lower  part of $L$.
  Hence there exist $a,b,c,d \in \mathbb{Z}$ such that
\begin{equation}\label{eq:abcd}
a  x + b  (x+1)+ c \sqrt3 + 2d =0.
\end{equation}

If $a+b\neq 0$,  setting $r=-\frac{b+2d}{a+b}, s=-\frac{c}{a+b}$, then $x=r+s\sqrt{3}$ and $r,s \in \mathbb{Q}$. The lemma holds in this case.

If $a+b=0$ for every $\overline{[u,v]}\in M$, then $b+2d+c\sqrt{3}=0$, so   $c=0$.
Let $X_{\overline{[u,v]}}$ be the collection of tiles whose side of length $\sqrt{3}$
is a subset of $\overline{[u,v]}$,  then $|X_{\overline{[u,v]}}|$  is an
even number, since each part of  $L$ contains half of these tiles. Since
\begin{equation*}
\{P_1,\dots, P_N\}=\bigcup_{\overline{[u,v]}\in M} X_{\overline{[u,v]}},
\end{equation*}
is a partition, we conclude that  $N$ is even, which is a contradiction. The first assertion is proved.

To prove the second assertion, we use an area argument.
 Denote the areas of $\Omega$ and $P$  by  $S_\Omega$ and $S_P$, respectively. Obviously $S_\Omega=NS_P$ and
\begin{equation*}
S_P=\frac{1}{2}(2x+1)\sqrt{3}=\frac{2r+1}{2}\sqrt{3}+3s.
\end{equation*}
Let  $\ell$ be the  side  length of $\Omega$. Then  $\ell=A+B\sqrt{3}$
 where $A, B\in \mathbb{Q}$.  So
\begin{equation*}
S_\Omega=A^2+3B^2+2AB\sqrt{3}.
\end{equation*}
Hence $A^2+3B^2=3Ns$, which implies  $s\geq 0$.
Finally,  $s\neq 0$ since  $\ell>0$. The second assertion is proved.
\end{proof}

As a direct consequence of $s>0$, we have

\begin{corollary}\label{lem:pure-2}
  The set $\{ax+b(x+1)+c\sqrt{3}; a, b, c\in {\mathbb N}\}$ contains no positive even numbers.
 Therefore,  if the upper part of a basic segment  is tiled by sides of length $2$ only, then so is the lower part.
\end{corollary}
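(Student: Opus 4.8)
The plan is to reduce everything to the conclusion of Lemma \ref{lem:r-s}, namely that $x=r+s\sqrt3$ with $r,s\in\mathbb Q$ and, crucially, $s>0$. First I would record the identity, valid for all $a,b,c\in\mathbb N$,
$$
ax+b(x+1)+c\sqrt3=\bigl((a+b)r+b\bigr)+\bigl((a+b)s+c\bigr)\sqrt3 .
$$
If this number were a positive even integer — or indeed any rational number at all — then, since $\sqrt3$ is irrational while $(a+b)r+b$ and $(a+b)s+c$ are rational, the coefficient of $\sqrt3$ would have to vanish, i.e.\ $(a+b)s+c=0$. Because $s>0$ and $a,b,c\ge 0$, this forces $a=b=c=0$, and then the expression equals $0$, which is not positive. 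Hence $\{ax+b(x+1)+c\sqrt3:\ a,b,c\in\mathbb N\}$ contains no positive rational, in particular no positive even number; this is exactly the first assertion.

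For the second assertion, let $\overline{[u,v]}$ be a basic segment whose upper part is tiled by sides of tiles of length $2$ only; then these sides partition $\overline{[u,v]}$, so $|\overline{[u,v]}|=2k$ for some integer $k\ge 1$. On the other hand, exactly as in the proof of Lemma \ref{lem:r-s} (using the identification $L_1=L_3$, $L_2=L_4$ if $\overline{[u,v]}$ lies on $\partial\Omega$, so that the lower part is also tiled by sides of tiles), there exist $a',b',c',d'\in\mathbb N$ with $|\overline{[u,v]}|=a'x+b'(x+1)+c'\sqrt3+2d'$. Subtracting gives $a'x+b'(x+1)+c'\sqrt3=2(k-d')$. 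The left-hand side is a sum of nonnegative terms, hence $\ge 0$, so $k\ge d'$; and $k>d'$ is impossible by the first assertion, since $2(k-d')$ would then be a positive even number. Therefore $k=d'$ and $a'x+b'(x+1)+c'\sqrt3=0$, which by the argument of the previous paragraph forces $a'=b'=c'=0$. Thus the lower part of $\overline{[u,v]}$ is tiled by sides of length $2$ only.

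I do not expect any genuine obstacle: the statement is a routine consequence of the irrationality of $\sqrt3$ together with the positivity $s>0$, the latter being the one substantive ingredient, already supplied by the area computation in Lemma \ref{lem:r-s}. The only points needing a little care are the sign bookkeeping in the second assertion — excluding $k<d'$ by nonnegativity and $k>d'$ by the first assertion — and the remark that a basic segment, being part of a maximal segment, inherits from Lemma \ref{lem:r-s} the property that both its upper and lower parts are tiled by sides of the tiles.
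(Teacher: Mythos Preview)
Your proof is correct and matches the paper's intended approach: the paper simply records the corollary ``as a direct consequence of $s>0$'' without spelling out a proof, and you have supplied precisely the routine details --- writing $ax+b(x+1)+c\sqrt3$ in the form $p+q\sqrt3$ with $p,q\in\mathbb Q$, using irrationality of $\sqrt3$ to force $q=(a+b)s+c=0$, and invoking $s>0$ together with $a,b,c\ge 0$ to conclude $a=b=c=0$. Your treatment of the second assertion, including the sign bookkeeping and the boundary identification, is also fine and in the same spirit.
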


\begin{lemma}\label{lem:3-beta}
  There is a vertex $v \in V$ such that the angle pattern at $v$ is $(\beta, \beta,\beta)$.
\end{lemma}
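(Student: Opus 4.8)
The plan is to obtain this as an essentially immediate consequence of Theorem \ref{thm:euler} together with the local angle analysis already performed in Section 5. Since we are assuming throughout this section that $N$ is odd, the hypotenuse graph $(V,\Gamma)$ of the tiling \eqref{tiling-3} cannot be component-wise Eulerian: otherwise Theorem \ref{thm:euler} would force $N$ to be even. First, then, I would record that $(V,\Gamma)$ has a vertex whose indegree and outdegree differ; and since $\sum_{u\in V}\deg^+(u)=\sum_{u\in V}\deg^-(u)=|\Gamma|$, there must in fact be a vertex $v\in V$ with $\deg^-(v)>\deg^+(v)$ (in particular $\deg^-(v)\ge 1$, so $v$ really lies in $V$).

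Next I would translate the degree inequality at $v$ into a statement about its angle pattern. As observed in the proof of Lemma \ref{lem:pattern}, each $\alpha$-angle at $v$ produces an edge of $\Gamma$ outgoing from $v$ and each $\beta$-angle produces an edge incoming to $v$; hence $\deg^+(v)$ is the number of $\alpha$-angles at $v$ and $\deg^-(v)$ the number of $\beta$-angles. Writing the $V$-decomposition at $v$ as $a\alpha+b\beta+c\pi/2$, we therefore have $0\le a<b$ and $c\ge 0$, which is precisely the configuration analysed in the proof of Theorem \ref{thm:trapezoid} for $\alpha\notin\{\pi/3,\pi/4\}$. Repeating that short case analysis with $\alpha=\pi/3$ (so $\beta=2\pi/3$): from $2\pi\ge a\alpha+b\beta+c\pi/2>a(\alpha+\beta)=a\pi$ one gets $a<2$; the case $a=1$ forces $(b-1)\beta+c\pi/2\in\{0,\pi\}$, impossible since $b\ge 2$; and the case $a=0$ forces $b\beta+c\pi/2\in\{\pi,2\pi\}$, whose only solution with $b\ge 1$ is $b=3$, $c=0$, i.e. $3\beta=2\pi$.

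Consequently the $V$-decomposition at $v$ is $3\beta=2\pi$, which means the angle pattern at $v$ consists of exactly three $\beta$-angles, that is, it is $(\beta,\beta,\beta)$ up to a rotation, as desired. I do not expect a genuine obstacle here: the only thing that needs a little care is checking that among the admissible totals $\pi/2,\pi,2\pi$ of a $V$-decomposition, the constraints $0\le a<b$ and $\alpha=\pi/3$ exclude everything except $3\beta=2\pi$, and this is the finite arithmetic verification indicated above.
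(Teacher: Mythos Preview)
Your proof is correct and follows essentially the same approach as the paper's own proof. The paper argues more tersely---it simply asserts that a vertex with more $\beta$-angles than $\alpha$-angles can only have angle pattern $(\beta,\beta,\beta)$---while you spell out the finite arithmetic verification by reusing the case analysis from Section~5.1 with $\alpha=\pi/3$; but the logic and structure are the same.
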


\begin{proof}
 Since $N$ is odd, the hypotenuse graph of the  trapezoid tiling   is not component-wise Eulerian.
Therefore, since the total number of angles measuring  $\alpha$ and the total number of angles measuring  $\beta$ are equal,
there exists a vertex $u\in V$ such that $\deg^-(u)>\deg^+(u)$, so in the angle pattern at $u$,
the number of angles measuring $\beta$ is larger than those measuring $\alpha$. This can only happen when
the angle pattern is $(\beta,\beta,\beta)$. (See Figure \ref{fig:3beta}.) The lemma is proved.
\end{proof}

\begin{figure}
  \centering
  \subfigure[]{
  \includegraphics[width=0.26\textwidth]{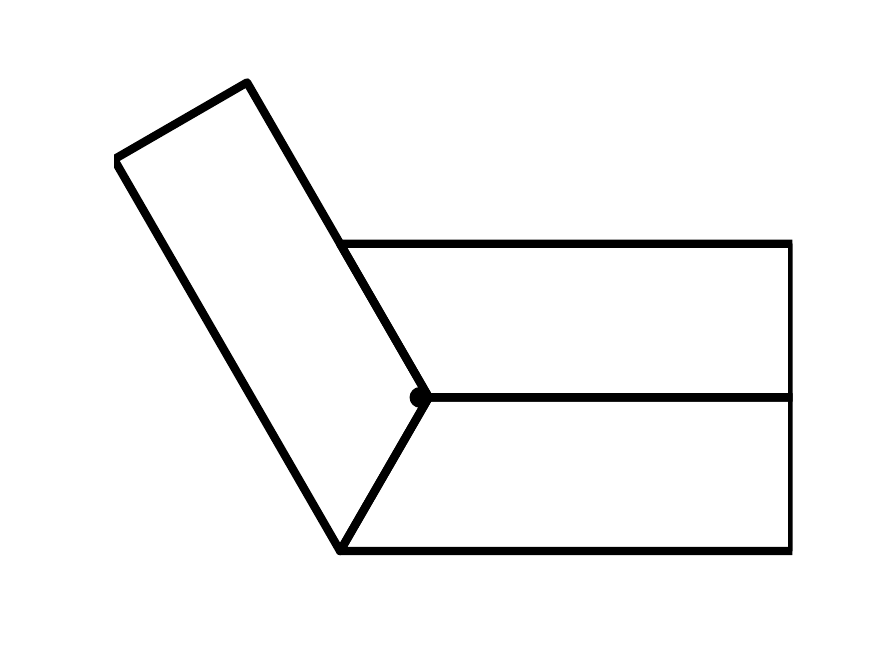}}
  \subfigure[]{
  \includegraphics[width=0.26\textwidth]{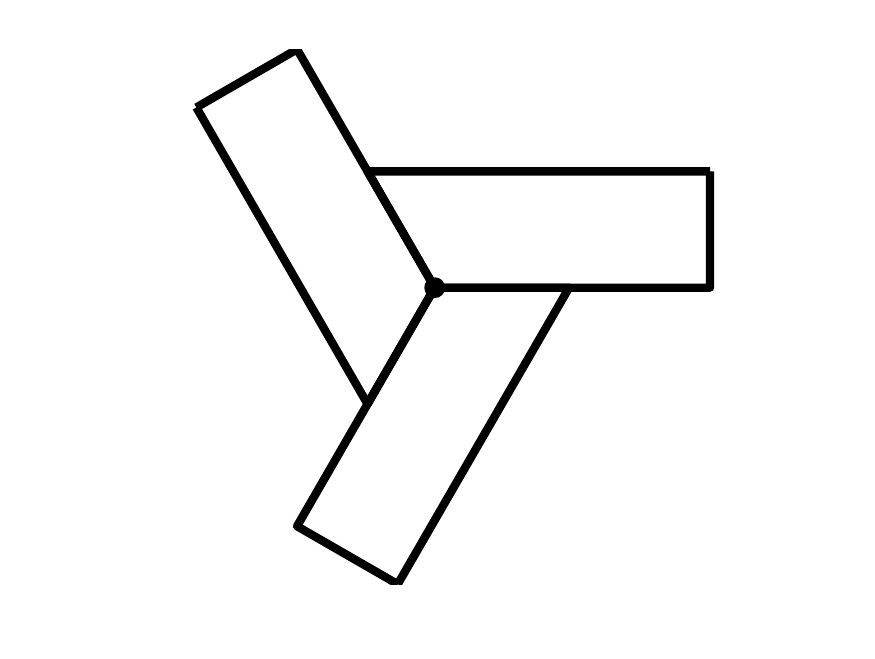}}
   \\
  \caption{Up to symmetry, there are two configurations for the angle pattern $(\beta,\beta,\beta)$.}
  \label{fig:3beta}
\end{figure}

Before proceeding to the proof of Theorem \ref{thm:trapezoid} when $\alpha=\pi/3$, we give some definitions.

\begin{definition} {\rm Let $u,v\in \mathcal V$. We call $\overline{[u, v]}$ a \emph{half maximal segment}
if it is a basic segment and  there exists $u'\in \mathcal V$ such that $\overline{[u',v]}$ is a maximal segment containing $\overline{[u,v]}$.
}
\end{definition}

By definition, a maximal segment itself is a half maximal segment.

Let $\overline{[u,v]}$ be a half maximal segment.
Let $K_1,\dots, K_k$ be the tiles in the upper part of $\overline{[u,v]}$, from left to right, such
that one side of $K_j$ is contained in $\overline{[u,v]}$. We denote the lengths of these sides by $a_j$,
and call $(a_1,\dots, a_k)$ the \emph{upper side sequence} of $\overline{[u,v]}$.
Similarly, we can define the \emph{lower side sequence}.

Let $(a_1,\dots, a_k)$ and $(b_1,\dots, b_{k'})$ be the upper and lower  side sequence of $\overline{[u,v]}$,
respectively. Let $T(j)$ denote the tile contributing the side $a_j$,
and $S(j)$ the tile contributing the side $b_j$.
But for clarity, we will use $T(j,a_j)$ and $S(j,b_j)$ instead of $T(j)$ and $S(j)$.
We call $T(j, a_j)$ an \emph{upper tile}, and $S(j, b_j)$
a \emph{lower tile}.

 \begin{definition} {\rm Let  $\overline{[u,v]}$ be a half maximal segment with
 upper and lower side sequences $(a_1,\dots, a_k)$ and $(b_1,\dots, b_{k'})$, respectively.
 If $a_1=2, b_1\neq 2$, or $a_1\neq 2, b_1=2$,  then we call $\overline{[u,v]}$ a \emph{special segment}.
 }
\end{definition}

By Corollary \ref{lem:pure-2}, we see that if $\overline{[u,v]}$ is a special segment, then neither $(a_1,\dots, a_k)$ nor $(b_1,\dots, b_{k'})$
 is $(2,\dots, 2)$.

Now we regard the points in $V$ as complex numbers.
We define the \emph{head information} of  a special segment $\overline{[u,v]}$ to be
\begin{equation*}
(u, \mathbf{x}, \delta, \theta),
\end{equation*}
where $\mathbf{x}=\frac{v-u}{|v-u|}$; $\delta=upper$ and $\theta$ is the angle of $T(1, a_1)$ at $u$ if $a_1=2$, and
  $\delta=lower$ and $\theta$ is the angle of $S(1, b_1)$ at $u$ if $b_1=2$.

 Let $\omega=\exp(2\pi\mathbf{i}/3)$.  For  a  given vector $\mathbf{x}\neq 0$, we define a partial order on $\mathbb C$ as follows:
  We say $u\prec v$ if   $u\neq v$ and $v-u=a\mathbf{x}+b\omega\mathbf{x}$ with $a,b\geq 0$.
  
  \begin{figure}[htbp]
\centering
  % Requires \usepackage{graphicx}
  \includegraphics[width=0.5\textwidth]{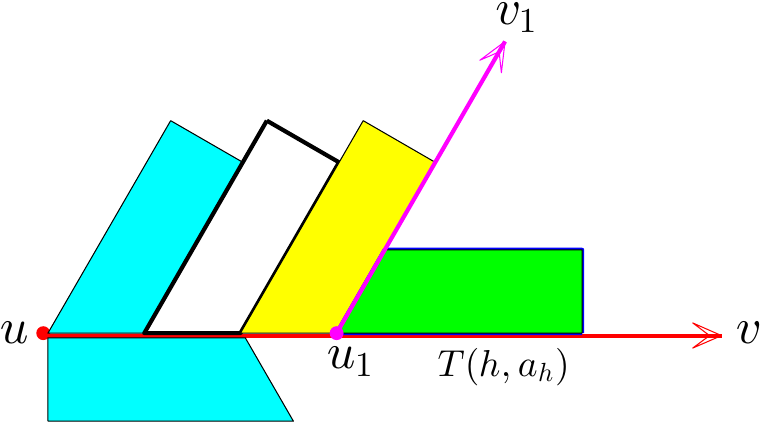}\\
  \caption{$\overline{[u,v]}$ is a special segment with head information  $(u, \mathbf{x}, upper, \alpha)$.
   It produces a new special segment $\overline{[u_1,v_1]}$, where $u_1=u+2(h-1)\mathbf{x}$.  }
  \label{fig:new-1}
\end{figure}

 \begin{lemma}\label{lem:type-II} If $\overline{[u,v]}$ is a special segment with  head information
   $(u, \mathbf{x}, upper, \alpha)$, then there exists a
 special segment  $\overline{[u_1,v_1]}$  with  head information $(u_1, \omega\mathbf{x}, lower, \alpha)$  and $u\prec u_1$.

 Similarly, if $\overline{[u,v]}$ is a special segment with head information
   $(u, \omega\mathbf{x}, lower, \alpha)$, then there exists a
 special segment  $\overline{[u_1,v_1]}$  with  head information  $(u_1, \mathbf{x}, upper, \alpha)$  and $u\prec u_1$.
 \end{lemma}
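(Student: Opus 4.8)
The plan is to analyze the local configuration of tiles at the endpoint $u$ of the special segment $\overline{[u,v]}$ and show that the geometry of the trapezoid with $\alpha=\pi/3$ forces a new special segment to emanate from a point $u_1$ lying "diagonally ahead" of $u$. I would start from the head information $(u,\bx,upper,\alpha)$: this says that the first upper tile $T(1,a_1)$ has its side of length $2$ along $\overline{[u,v]}$, meeting $u$ at an angle $\alpha=\pi/3$, while the first lower tile $S(1,b_1)$ does \emph{not} have a side of length $2$ along $\overline{[u,v]}$ (by the definition of special segment, and by Corollary \ref{lem:pure-2} the lower side sequence is not all $2$'s, so in particular $b_1\in\{x,x+1,\sqrt3\}$). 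The key point is that in a right-angle trapezoid with acute angle $\pi/3$, the hypotenuse makes angle $\alpha=\pi/3$ (or $\beta=2\pi/3$) with the two parallel sides; so once I know which side of $T(1,a_1)$ lies along $\overline{[u,v]}$ and at what angle, I know the position of $T(1,a_1)$ exactly, and hence I know which vertex of $T(1,a_1)$ sits at $u$, what edges of $T(1,a_1)$ leave $u$, and in particular the direction $\omega\bx$ in which the relevant hypotenuse or short side points.

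**Next I would track the angle pattern at $u$.** Since $u\in\widehat V$ and lies on the boundary configuration, the angles of all tiles meeting at $u$ on the relevant side sum to $\pi$ (if $u$ is interior to a maximal segment) or to $\pi/2$ or $2\pi$ otherwise; combined with Lemma \ref{lem:r-s} (so $x=r+s\sqrt3$ with $s>0$, forcing rigid rational/$\sqrt3$ relations among the angles, all of which are multiples of $\pi/6$ when $\alpha=\pi/3$) this pins down the list of possible angle patterns at $u$. Because the angle of $T(1,a_1)$ at $u$ is exactly $\alpha$, I read off what tile must sit immediately clockwise from $T(1,a_1)$ across the line $\overline{[u,v]}$, i.e. the first lower tile $S(1,b_1)$; since $b_1\ne 2$, the side of $S(1,b_1)$ along $\overline{[u,v]}$ has length in $\{x,x+1,\sqrt3\}$, and its angle at $u$ together with $\alpha$ must be compatible with one of the admissible $V$-decompositions. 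Carrying this out in each subcase produces a second tile $K'$ one of whose sides of length $2$ lies along a \emph{new} line through a point $u_1$ in direction $\omega\bx$, with $K'$ on the lower side of that new segment and meeting $u_1$ at angle $\alpha$ — this is exactly the asserted head information $(u_1,\omega\bx,lower,\alpha)$. The relation $u\prec u_1$ (that $u_1-u = a\bx + b\omega^2\bx$ with $a,b>0$) comes from measuring the displacement: $u_1$ is obtained from $u$ by going a positive distance along $\bx$ and a positive distance along $\omega^2\bx$, the two edge directions of the wedge occupied by $T(1,a_1)$.

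**The second statement** (head information $(u,\omega\bx,lower,\alpha)$ giving a special segment with $(u_1,\bx,upper,\alpha)$ and $u\prec u_1$) I would obtain by the symmetric argument, applying a reflection/rotation that swaps the roles of "upper" and "lower" and sends $\omega\bx$ back to $\bx$; the combinatorics of angle patterns is identical, so no new work is needed beyond noting that the symmetry of the two cases is built into the trapezoid's geometry.

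**I expect the main obstacle** to be the case analysis of exactly which tiles can occupy the wedges at $u$: there are several admissible angle patterns containing an $\alpha$ (namely $(\alpha,\beta)$, $(\alpha,\beta,\alpha,\beta)$, $(\alpha,\alpha,\beta,\beta)$, $(\alpha,\beta,\pi/2,\pi/2)$, $(\alpha,\pi/2,\beta,\pi/2)$, plus the boundary cases summing to $\pi/2$ or $2\pi$), and for each I must verify that the tile immediately across $\overline{[u,v]}$ from $T(1,a_1)$ is forced to have a length-$2$ side starting at a point $u_1$ in direction $\omega\bx$ on its lower side, rather than, say, a length-$x$ side or a tile glued the "wrong" way. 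Ruling out the spurious configurations is where Corollary \ref{lem:pure-2} (no positive even number is an $\N$-combination of $x,x+1,\sqrt3$) does the real work: it forbids a maximal segment from being tiled on one side purely by length-$2$ sides unless the other side is too, which is what prevents the new special segment from degenerating. The displacement bookkeeping that yields $u\prec u_1$ is then routine once the tile positions are fixed. I would organize the proof as a short figure-driven enumeration, illustrating the one or two generic subcases and remarking that the others follow identically.
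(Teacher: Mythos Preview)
Your plan analyzes the wrong location. The paper's proof does not look at the angle pattern at $u$ at all; instead it walks along the upper side of $\overline{[u,v]}$. Since $a_1=2$ and the upper side sequence cannot be all $2$'s (because $b_1\neq 2$ and Corollary~\ref{lem:pure-2} forbids one side being purely $2$'s when the other is not), there is a least index $h\geq 2$ with $a_h\neq 2$. Set $u_1=u+2(h-1)\bx$. The chain of length-$2$ hypotenuses $T(1),\dots,T(h-1)$ forces an alternating pattern of angles $\alpha,\beta,\alpha,\beta,\dots$ along the upper edge, because each hypotenuse carries $\alpha$ at one end and $\beta$ at the other and two consecutive upper tiles fill a straight angle. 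Hence $T(h-1)$ contributes $\beta$ at $u_1$, so $T(h)$ contributes $\alpha$ there; since $a_h\neq 2$, this fixes the orientation of $T(h)$, and its hypotenuse at $u_1$ points in direction $\omega\bx$, producing the new special segment with head information $(u_1,\omega\bx,lower,\alpha)$.

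Your proposed route, deducing a new special segment from the angle pattern at $u$ and the identity of $S(1,b_1)$, does not reach this conclusion. There is no reason any tile adjacent to $u$ should have a length-$2$ side in direction $\omega\bx$; the new special segment appears at $u_1$, which may lie far down $\overline{[u,v]}$ and is determined by where the run of length-$2$ upper sides terminates, not by the local configuration at $u$. The list of angle patterns you plan to enumerate is the one from Lemma~\ref{lem:pattern}, which \emph{characterizes} the Eulerian case; here we are precisely in the non-Eulerian case $\alpha=\pi/3$ (so, e.g., $(\beta,\beta,\beta)$ and other patterns occur), and that list is not exhaustive. Finally, the displacement $u_1-u=2(h-1)\bx$ is purely along $\bx$, not a combination $a\bx+b\omega^2\bx$ with both $a,b>0$ as you assert; the ``diagonal'' progress in the partial order comes only after a second application of the lemma, when the next step moves along $\omega\bx$.
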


 \begin{proof} Let $(a_1,\dots, a_k)$ and $(b_1,\dots, b_{k'})$ be the upper and lower side sequence
 of $\overline{[u,v]}$, respectively.

 Suppose the head information   of $\overline{[u,v]}$  is
 $(u, \mathbf{x}, upper, \alpha)$.
Then $a_1=2$ and $b_1\neq 2$. Let $h$ be the minimal integer such that $a_h\neq 2$, then $h\geq 2$.
(The existence of $h$ is guaranteed by Corollary \ref{lem:pure-2}.)
 Let
\begin{equation*}
 u_1=u+ 2(h-1)\mathbf{x}.
\end{equation*}
 By Corollary \ref{lem:pure-2}, $u_1\prec v$.
  Since $T(h-1,a_{h-1})$ contributes an angle $\beta$ at $u_1$,  the tile  $T(h,a_h)$
  must contribute an angle $\alpha$ at $u$, and the orientation of $T(h, a_h)$
 is positive. See Figure \ref{fig:new-1}.  Hence the pattern of Figure \ref{fig:forbidden} (a)  occurs, and there is a special segment
 with head information
 $(u_1, \omega\mathbf{x}, lower, \alpha).$ The first assertion is proved.

 The second assertion can be proved in the same manner as the first one.
 \end{proof}

\begin{corollary}\label{cor:type-II} Special segments with head  information  $(u, \mathbf{x}, \delta, \alpha)$  do not exist.
\end{corollary}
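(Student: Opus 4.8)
The plan is to argue by contradiction using Lemma \ref{lem:type-II} together with the finiteness of the tiling \eqref{tiling-3}. Suppose a special segment with head information $(u, \bx, \delta, \alpha)$ exists. First I would normalise the situation: a special segment whose head angle is $\alpha$ has head information $(u_0, \by, \delta, \alpha)$ for some unit vector $\by$ and some $\delta\in\{upper,lower\}$, and after renaming the direction vector (taking $\bx:=\by$ if $\delta=upper$, and $\bx:=\omega^2\by$ if $\delta=lower$, so that the direction becomes $\omega\bx$) the hypothesis of one of the two parts of Lemma \ref{lem:type-II} is met. Then I would chain the two parts together: starting from a special segment with head information $(u_0,\bx,upper,\alpha)$, part one produces a special segment with head information $(u_1,\omega\bx,lower,\alpha)$ and $u_0\prec u_1$; part two applied to that segment produces one with head information $(u_2,\bx,upper,\alpha)$ and $u_1\prec u_2$; iterating indefinitely yields special segments $\overline{[u_0,v_0]},\overline{[u_1,v_1]},\overline{[u_2,v_2]},\dots$ with
$$u_0\prec u_1\prec u_2\prec\cdots ,$$
where in every step $\prec$ is the partial order attached to the same fixed vector $\bx$.

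Next I would check that $\prec$ (for this fixed $\bx$) is a genuine strict partial order on $\mathbb C$. It is transitive, since if $v-u$ and $w-v$ are both of the form $a\bx+b\omega^2\bx$ with $a,b>0$, then so is their sum $w-u$; and it is irreflexive, since $\bx$ and $\omega^2\bx$ are linearly independent over $\R$ (because $\omega^2\notin\R$), so $a\bx+b\omega^2\bx=0$ forces $a=b=0$, contradicting $ab\neq 0$. Hence the terms of the chain $u_0\prec u_1\prec u_2\prec\cdots$ are pairwise distinct. (If one prefers a hands-on version: writing $(u_{i+1}-u_i)/\bx=a_i+b_i\omega^2$ with $a_i,b_i>0$ gives $\image\bigl((u_{i+1}-u_i)/\bx\bigr)=-b_i\sqrt3/2<0$, so the numbers $\image(u_i/\bx)$ are strictly decreasing in $i$, forcing the $u_i$ to be distinct.)

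Finally I would invoke finiteness: each $u_i$ is a vertex of the hypotenuse graph, i.e. $u_i\in V$, and $V$ is finite because the tiling \eqref{tiling-3} uses only finitely many tiles. An infinite sequence of pairwise distinct elements of the finite set $V$ is impossible, which is the desired contradiction; therefore no special segment with head information $(u,\bx,\delta,\alpha)$ exists. This is a pure descent/finiteness argument once Lemma \ref{lem:type-II} is available, so I do not expect a serious technical obstacle here. The one point that deserves a line of care is making sure the successive applications of Lemma \ref{lem:type-II} are read as using one and the same vector $\bx$ in the definition of $\prec$, so that the whole chain $u_0\prec u_1\prec\cdots$ lives in a single partially ordered set rather than in partial orders attached to alternating directions $\bx$ and $\omega\bx$.
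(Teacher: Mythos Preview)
Your proposal is correct and follows the same descent argument as the paper: iterate Lemma~\ref{lem:type-II} to produce an infinite $\prec$-chain of heads $u_0\prec u_1\prec\cdots$ inside the finite vertex set, a contradiction. You spell out more than the paper does (the normalisation of $\delta$, the verification that $\prec$ is irreflexive and transitive, and the observation that both clauses of Lemma~\ref{lem:type-II} use the partial order attached to the \emph{same} $\bx$), but the underlying idea is identical.
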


\begin{proof} If $\overline{[u,v]}$ is a special segment with head information $(u,\mathbf{x}, \delta, \alpha)$,
then by Lemma \ref{lem:type-II}, there exists a sequence of special segments $\overline{[u_k, v_k]}$, $k\geq 1$, such that $u_k\prec u_{k+1}$ for all $k$. This implies that $\mathcal{V}$ is an infinite set since
it contains all $u_k$, which is absurd.
\end{proof}

By Corollary \ref{cor:type-II}, the patterns in Figure \ref{fig:forbidden} cannot occur in the   tiling
 \eqref{tiling-3}.

\begin{figure}
\centering
  \subfigure[]{
  \includegraphics[width=0.26\textwidth]{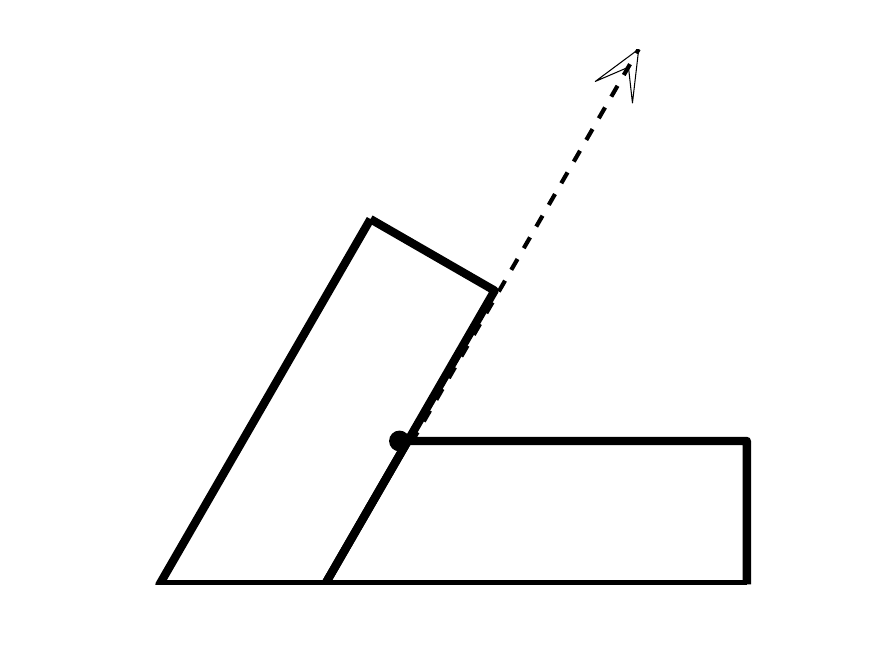}}
  \subfigure[]{
  \includegraphics[width=0.26\textwidth]{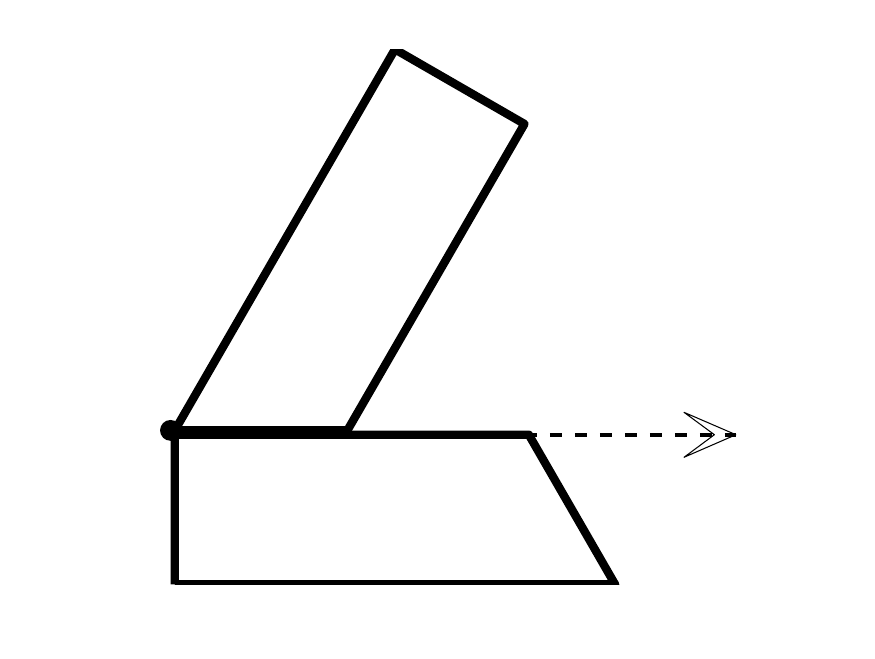}}
  \subfigure[]{
   \includegraphics[width=0.26\textwidth]{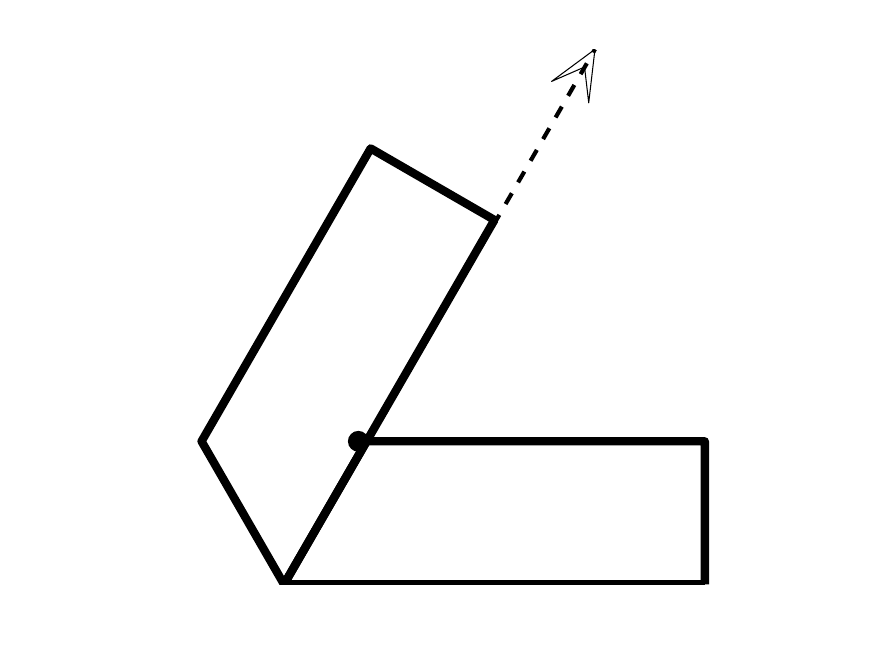}}\\
  \caption{Examples of forbidden patterns. There are many others.}
  \label{fig:forbidden}
\end{figure}

 \begin{lemma}\label{lem:type-I} Let $\overline{[u,v]}$ be a special segment with  head information  in  one of the following forms:
  \begin{equation}\label{eq:info}
  (u,\mathbf{x},upper, \beta), \   (u, \omega \mathbf{x}, lower, \beta).
  \end{equation}
 Then there exists a
 special segment  $\overline{[u',v']}$  with  head information in \eqref{eq:info} (after replacing $u$ by $u'$) and $u\prec u'$.
 \end{lemma}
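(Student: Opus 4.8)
The plan is to mimic the proof of Lemma~\ref{lem:type-II}: follow the tiles lying along $\overline{[u,v]}$ starting from $u$, locate the first place where the upper side sequence leaves the value $2$, and read off from there a new special segment. The one new ingredient is Corollary~\ref{cor:type-II} (equivalently, the absence of the configurations in Figure~\ref{fig:forbidden}), which is now available and which will rule out the branches in which a special segment with head angle $\alpha$ would appear. Since the second form of \eqref{eq:info} is treated symmetrically, it is enough to assume the head information of $\overline{[u,v]}$ is $(u,\bx,upper,\beta)$; then $a_1=2$, the upper tile $T(1,a_1)$ has its $\bb$-vertex at $u$, and $b_1\neq 2$.

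First I would analyse the initial block of $2$'s in the upper side sequence. Since $\overline{[u,v]}$ is special, the remark after the definition of a special segment (which rests on Corollary~\ref{lem:pure-2}) gives $(a_1,\dots,a_k)\neq(2,\dots,2)$; let $h\ge 2$ be least with $a_h\neq 2$, so $a_1=\cdots=a_{h-1}=2$ are hypotenuses placed head to tail along $\overline{[u,v]}$, and put $u_1=u+2(h-1)\bx$, a point strictly between $u$ and $v$. At every vertex interior to $\overline{[u,v]}$ the angles on the upper side sum to $\pi$ and the only admissible angles are $\alpha,\beta,\pi/2$; starting from the known placement of $T(1)$ at $u$ and using repeatedly that $\alpha$ is the smallest angle, one determines which vertex of $T(h-1,a_{h-1})$ sits at $u_1$, hence the angle that the tile $T(h,a_h)$ carrying the first non-$2$ side makes there. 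Because $a_h\neq 2$ and the upper angle at $u_1$ leaves no room for a $\pi/2$, either $T(h,a_h)$ has its $\ba$-vertex at $u_1$, so $a_h=x+1$, or it has its $\bb$-vertex at $u_1$, so $a_h=x$. In the first case the construction yields a special segment with head angle $\alpha$ issuing from $u_1$ --- a forbidden configuration of the kind in Figure~\ref{fig:forbidden} --- which is impossible by Corollary~\ref{cor:type-II}; this is precisely where that corollary enters. In the surviving case $a_h=x$ the upper side at $u_1$ consists exactly of $T(h,a_h)$ (angle $\beta$) and $T(h-1,a_{h-1})$ (angle $\alpha$), and the hypotenuse of $T(h,a_h)$, of length $2$, together with the side $\overline{[\bd,\ba]}$ of $T(h-1,a_{h-1})$, of length $x+1\neq 2$, both lie on the ray from $u_1$ in the direction $\omega\bx$; hence the maximal segment through $u_1$ in the direction $\omega\bx$ is special, with head information $(u_1,\omega\bx,lower,\beta)$, an instance of the second form of \eqref{eq:info}.

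Since $u_1=u+2(h-1)\bx$ is displaced from $u$ only in the direction $\bx$, I would then apply the same construction once more, to $\overline{[u_1,v_1]}$, producing a further special segment $\overline{[u',v']}$ whose head information is again of one of the forms in \eqref{eq:info}; writing the displacements $u_1-u$ and $u'-u_1$ in the basis $\{\bx,\omega^2\bx\}$ and using $1+\omega+\omega^2=0$ shows that $u'-u$ has both coordinates positive, i.e.\ $u'$ lies strictly in the cone spanned by $\bx$ and $\omega^2\bx$, which is the relation $u\prec u'$ claimed by the lemma. I expect the main obstacle to be the case analysis in the second paragraph: unlike the $\alpha$-case of Lemma~\ref{lem:type-II}, with head angle $\beta$ the vertices along the initial block, and $u_1$ in particular, can a priori carry several angle patterns --- for instance $(\alpha,\beta)$, $(\alpha,\beta,\alpha,\beta)$, $(\beta,\alpha,\pi/2,\pi/2)$, or $(\beta,\beta,\beta)$ (the configurations of Figure~\ref{fig:3beta}) --- and moreover the lower tiles of $\overline{[u,v]}$ may reach up to $u_1$; each of these local pictures must be carried through the same ``no forbidden pattern, hence a new special segment of type \eqref{eq:info}'' argument, and one must keep careful track of which of the two forms in \eqref{eq:info} is produced and on which side the distinguished hypotenuse lies.
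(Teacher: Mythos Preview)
Your plan follows the paper's outline closely up to the point where you branch on the value of $a_h$, but the handling of the case $a_h=x+1$ is where the argument breaks down. You assert that when $T(h)$ has its $\ba$-vertex at $u_1$ one immediately obtains a special segment with head angle $\alpha$, hence a forbidden configuration. This is not automatic. At $u_1$ the upper angles are $\alpha$ from $T(h-1)$ and $\alpha$ from $T(h)$, leaving a wedge of angle $\pi/3$ that is filled by a third upper tile $W$, also with its $\ba$-vertex at $u_1$. The hypotenuse of $T(h)$ and one side of $W$ lie on a common ray from $u_1$; if $W$ contributes its $\overline{[\ba,\bd]}$ side there you do get a special segment with head angle $\alpha$ and Corollary~\ref{cor:type-II} applies --- but $W$ may instead contribute its \emph{hypotenuse}, and then both the upper and lower side sequences of that ray begin with $2$, so no special segment is created at $u_1$ and nothing is yet forbidden.

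The paper's proof treats exactly this situation as its Case~2. One follows the new half-maximal segment $\overline{[u_1,v_1]}$ on which $c_1=d_1=2$, takes the largest $p$ with $c_1=\cdots=c_p=d_1=\cdots=d_p=2$, and shows that if the run of $2$'s stops before the end of the segment (Case~2.1) a forbidden pattern appears, while if the run exhausts the segment (Case~2.2) the endpoint $u_2=v_1$ has angle pattern $(\beta,\beta,\beta)$ and this produces a new special segment with head information of the form~\eqref{eq:info} and with $u\prec u_2$. This second alternative is a genuine, non-forbidden configuration, and it is precisely the piece your outline omits. Without it, the dichotomy ``$a_h=x+1$ is forbidden, $a_h=x$ gives the new segment'' is false, and the induction does not go through. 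Your final paragraph, where you iterate the $a_h=x$ step once more to land back in~\eqref{eq:info} and then check $u\prec u'$, is a reasonable idea, but it cannot substitute for the missing Case~2 analysis.
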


 \begin{proof}
 Let $(a_1,\dots, a_k)$ and $(b_1,\dots, b_{k'})$ be the upper and lower side sequences of $\overline{[u,v]}$, respectively.

First, let us assume that the  head information  of $\overline{[u,v]}$  is
 $(u,\mathbf{x}, upper, \beta)$. Then $a_1=2, b_1\neq 2$.
 Let $h$ be the minimal integer such that $a_h\neq 2$, then $h\geq 2$.
 (Again the existence of $h$ is guaranteed by Corollary  \ref{lem:pure-2}.)
 Let
\begin{equation*}
 u_1=u+ 2(h-1)\mathbf{x}.
\end{equation*}
 (We remark that $u_1$ is a kind of turning point.)
  Notice that $u\prec u_1$.

 First, we argue that $T(h-1, a_{h-1})$ provides an angle $\alpha$ at $u_1$.
 Otherwise,  $T(h-1, a_{h-1})$ provides an angle $\beta$ at $u_1$ implies that  $T(h, a_h)$  provides an angle $\alpha$
 at $u_1$.
 Then  the tiles $T(h-1, a_{h-1})$ and $T(h, a_h)$  form the forbidden pattern in Figure \ref{fig:forbidden} (a).
Our assertion is proved. Now $a_h=x$ or $x+1$ since $a_h\neq \sqrt{3}$.

\begin{figure}[htbp]
\centering
  \includegraphics[width=0.5\textwidth]{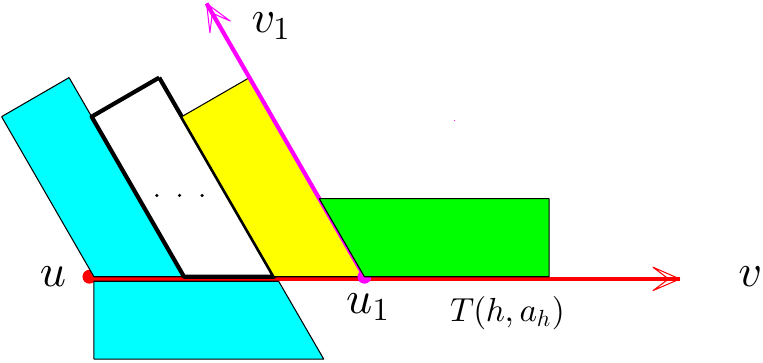}\\
  \caption{Case 1 of the proof of Lemma \ref{lem:type-I}. The yellow tile and the green tile form
  a  special segment with head information $(u_1, \omega\mathbf{x}, lower, \beta)$.
  }
  \label{fig:type-I-2}
\end{figure}

 \emph{Case 1.} If  $a_h=x$, then $T(h, a_h)$ provides an angle $\beta$
 at $u_1$, and there is a half maximal segment $\overline{[u_1,v_1]}$ with direction
 $\mathbf{y}=\omega\mathbf{x}.$
 Moreover, it follows that $\overline{[u_1,v_1]}$ is a special segment with head information
 $(u_1, \omega\mathbf{x}, lower, \beta)$, as we desired.  See Figure \ref{fig:type-I-2}.

 \emph{Case 2.} If $a_{h}=x+1$, then $T(h, a_h)$ provides an angle $\alpha$
 at $u_1$. Let $\overline{[u_1,v_1]}$ be the half maximal segment  with direction $\omega\mathbf{x}.$

 Let $(c_1,\dots, c_q)$ and $(d_1,\dots, d_{q'})$ be the upper and lower side sequence
 of $\overline{[u_1,v_1]}$, respectively. Then $d_1=2$.
We assert that $c_1=2$, for otherwise, the forbidden pattern in Figure \ref{fig:forbidden} (c) will occur.
Let $p$ be the maximal integer such that
 $c_1=\cdots=c_p=d_1=\cdots=d_p=2.$
 Denote
\begin{equation*}
 u_2=u_1+2p(\omega \mathbf{x}).
\end{equation*}

 \emph{Case 2.1.} If $p<q$, then at least one of $c_{p+1}$ and $d_{p+1}$ is not $2$.

 If $c_{p+1}\neq 2$, since  $\overline{[u_1,u_2]}$ is not a half maximal segment,
 the angle pattern at  $u_2$ must be $(\alpha, \alpha, \beta,\beta)$. Let $T$ be the upper tile
 at $u_2$ contributing an angle $\alpha$, then  $T$ must be negatively oriented, so $T$ and the last  upper tile
  $T(p, c_p)$ form a forbidden pattern. (See Figure \ref{fig:type-I-3} (a).)

 If $d_{p+1}\neq 2$, similarly, we get a forbidden pattern at the lower part of $\overline{[u_1,v_1]}$.

 \emph{Case 2.2.} If $p=q$, then every $c_j$ and $d_j$ is $2$, and $p=q=q'$.
 So, $v_1=u_2$, and the angle pattern at $v_1$ must be $(\beta,\beta,\beta)$,
 and the configuration in Figure \ref{fig:3beta} (a) or its reflection occurs.
 Therefore,
 there exists a   special segment  $\overline{[u_2,v_2]}$  with head information
 $(u_2, \omega^2 \mathbf{x}, lower, \beta)$ or $(u_2,   \mathbf{x}, upper, \beta)$,
 see Figure \ref{fig:type-I-3} (b).
 
  \begin{figure}[htbp]
 \centering
  % Requires \usepackage{graphicx}
  \subfigure[]
  {\includegraphics[width=0.4\textwidth]{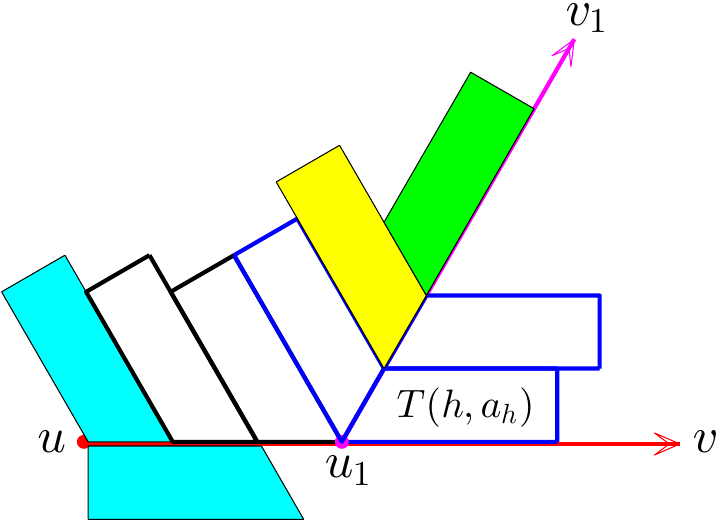}}
  \subfigure[]
  {\includegraphics[width=0.4\textwidth]{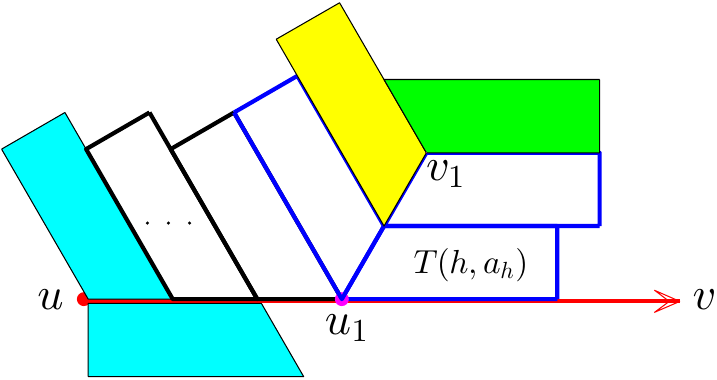}}\\
  \caption{Case 2 of the proof of Lemma \ref{lem:type-I}. (a) Case 2.1: The yellow tile and the green tile form
  a forbidden pattern. (b) Case 2.2: The yellow tile and the green tile
  produce a new special segment.}
  \label{fig:type-I-3}
\end{figure}

 The case that  the head information  of $\overline{[u,v]}$  is
 $(u,\omega \mathbf{x}, lower, \beta)$  can be dealt with in the same manner as above.
The lemma is proved.
 \end{proof}

\noindent\textbf{Proof of Theorem \ref{thm:trapezoid} when $\alpha=\pi/3$.}
By Lemma \ref{lem:3-beta}, there exists a  special segment with head information $(u, \mathbf{x}, \delta, \beta)$, see Figure \ref{fig:3beta}.
  Then, by Lemma \ref{lem:type-I},  there exists a sequence of special segments
$\overline{[u_k, v_k]}$, $k\geq 1$,  such that $u_k\prec u_{k+1}$ for all $k$.
But this  contradicts  the fact that $\cal{V}$ is a finite set. Therefore, the assumption that $N$ is odd is wrong.
$\Box$

\section{\textbf{Some questions}}\label{sec:question}
We close this paper with some questions.

\medskip
 \textbf{ Question 1.} \emph{What kind  of quadrilaterals  can tile a square?}
We believe that if a quadrilateral can tile a square, then it
must contain at least two right angles.
%is either a rectangle, or a right-angle trapezoid, or itis a quadrilateral with angles $(\alpha, \pi/2, \pi-\alpha, \pi/2)$.
See Figure \ref{fig:tiling}.

\medskip
 \textbf{ Question 2.} \emph{Can we replace the square by a rectangle in Conjecture 1?}
It is seen that the answer is yes for $q\neq 4$.
  For Theorem \ref{thm:trapezoid}, this is also true   except the case  that $\alpha=\pi/3$.
(The only place in which we use that $\Omega$ is a square rather than a rectangle is to prove $s>0$ in Lemma \ref{lem:r-s}.)
%We guess for $\alpha=\pi/3$, $\Omega$ can still be replaced by a rectangle.

\medskip
 \textbf{ Question 3.}
 \emph{How does a right-angle trapezoid tile a square?} Let $P$ be  a right-angle trapezoid
  and  \eqref{eq:Danzer} be a tiling of $\Omega$ by $P$. We believe that  every connected component of the hypotenuse graph is a cycle consisting of two edges. In other words, the tiles must be paired by their hypotenuse. See Figure \ref{fig:tiling} (a).

\begin{figure}[htbp]
\centering
\subfigure[]{
  \includegraphics[width=0.35\textwidth]{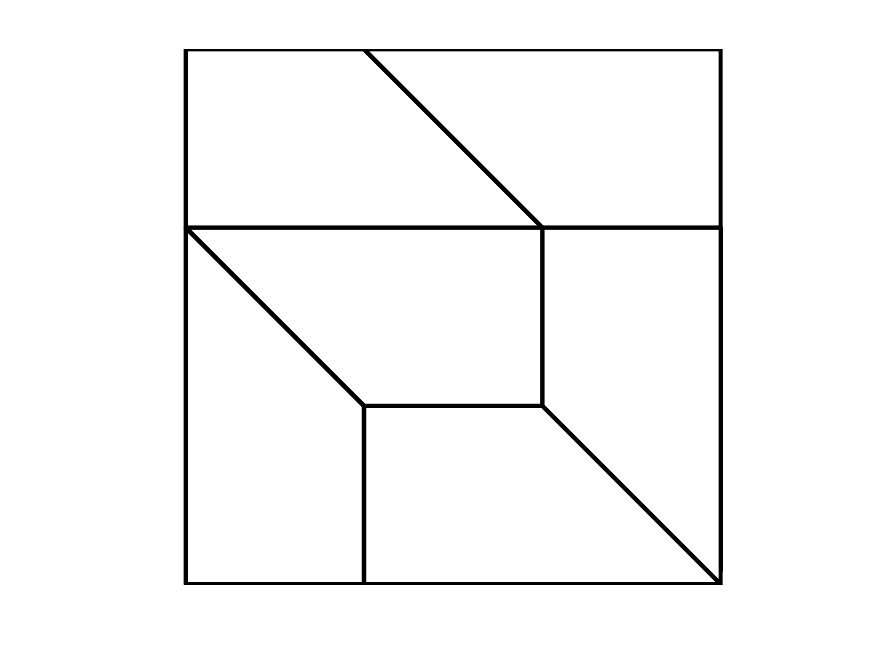}}
  \subfigure[]{
   \includegraphics[width=0.35\textwidth]{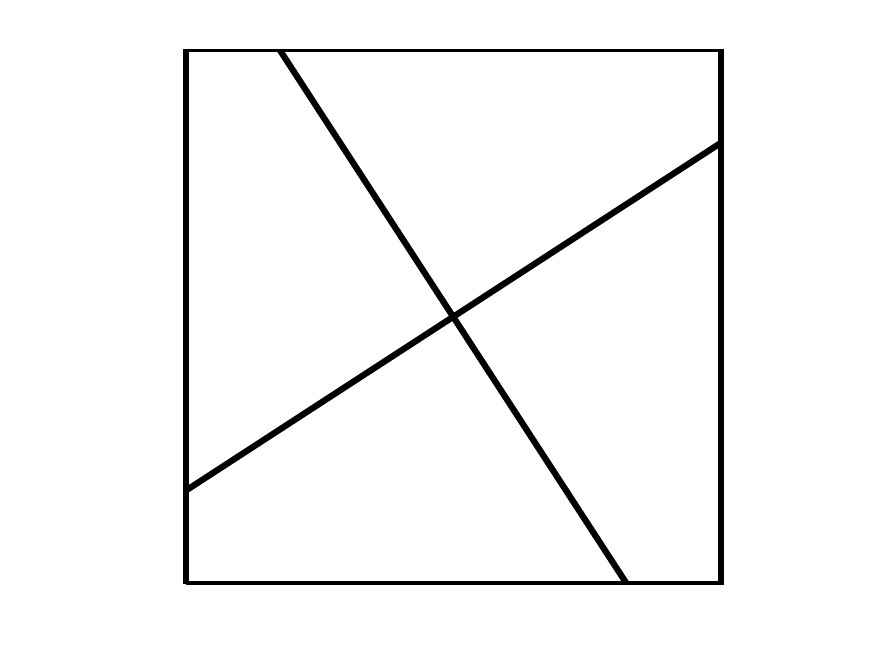}}
   \\
 \caption{Two classes of quadrilaterals which can tile a square.}
 \label{fig:tiling}
\end{figure}

\acknowledgements
We thank the anonymous referees for their careful work and many valuable comments.

\nocite{*}
\bibliographystyle{abbrvnat}
% use the following instead if you encounter problems 
%\bibliographystyle{alpha}
\bibliography{dmtcs_final_ref}
\setcitestyle{numbers}
\label{sec:biblio}

\end{document}